\documentclass[reqno]{amsart}
\usepackage{xcolor,fullpage,hyperref,cleveref,enumitem}
\usepackage{tikz,amsmath,amssymb,color,mathtools}
\usetikzlibrary{calc}
\usepackage{float}
\usepackage{pgf}
\usepackage{amsrefs}
\usepackage[normalem]{ulem}
\usepackage{listings}
\usepackage{multirow}

\theoremstyle{definition} 
\newtheorem{theorem}{Theorem}[section]

\newtheorem{proposition}[theorem]{Proposition}

 \newtheorem{example}{Example}
 
\newtheorem{lemma}[theorem]{Lemma}
\newtheorem{claim}{Claim}

\theoremstyle{definition}
\newtheorem{definition}{Definition}[section]
\theoremstyle{remark}
\newtheorem*{remark}{Remark}

\newcommand{\NN}{\mathbb{N}}

\newcommand{\out}{\mathcal{O}}

\newcommand{\Sym}{\mathfrak{S}}
\DeclareMathOperator\pk{pk}
\DeclareMathOperator\Pk{Pk}
\DeclareMathOperator\Av{Av}

\DeclareMathOperator\red{red}

\newcommand{\Bn}[3]{B_{#1}^{(#2)}(#3)}

\usepackage{thmtools} 
\usepackage{thm-restate}

\title{A parking function analog of the Schr\"{o}der numbers}

\author[Martinez]{Lucy Martinez}
\address[L.~Martinez \& D.~Zeilberger]{Department of Mathematics, Rutgers University, Piscataway, NJ 08854}
\email{\textcolor{blue}{\href{mailto:lucy.martinez@rutgers.edu}{lucy.martinez@rutgers.edu}}}

\author[Zeilberger]{Doron Zeilberger}
\email{\textcolor{blue}{\href{mailto:doronzeil@gmail.com}{doronzeil@gmail.com}}}

\begin{document}

\begin{abstract}
Pattern avoidance is a central topic in the study of permutations. Parking functions provide a natural setting in which to ask analogous questions. In this paper, we study parking functions whose parking permutations avoid the patterns $1234$ and $2134$, obtaining an analog of the Schr\"{o}der numbers for parking functions. To enumerate these objects, we develop an enumeration scheme that runs in polynomial time and adapt the prefix-based framework introduced by Zeilberger in 1998, and later improved by Vatter, by instead using ``suffixes.''
\end{abstract}

\maketitle

\section{Introduction}
Enumerating pattern avoiding permutation classes has a long history. In 1985, Simion and Schmidt enumerated permutations avoiding a pair of patterns of length $3$. Many further enumerative results have followed for patterns in permutations and in words, see B\'{o}na~\cite{MBona12}, Kitaev~\cite{Kitaev} and Vatter~\cite{Vatter15} for surveys. Numerous systematic techniques have been developed for enumerating permutation classes.
These include generating trees~\cite{CGHK, Vatter06, West95, West96}, insertion encoding~\cite{insertionencoding}, substitution decompositions~\cite{subdecomp}, and enumeration schemes~\cite{Zeilberger98}. 

In 1998, Doron Zeilberger introduced prefix enumeration schemes to count pattern avoiding permutations by providing an automated method for counting Wilf equivalence classes~\cite{Zeilberger98}. However, the success rate for finding these schemes was limited. In 2005, Vatter extended Zeilberger's results by introducing ``gap vectors'', which completely automated the enumeration of many more Wilf classes making the enumeration scheme much more powerful~\cite{Vatter08}. Later, Zeilberger reformulated Vatter's schemes back to his original notation, allowing for even quicker enumeration of many pattern avoiding permutation classes~\cite{Zeilberger06}.
Other work on enumeration schemes includes that of Pudwell, who studied pattern avoidance in words and barred permutations by automating new enumeration results~\cite{Pudwell, barred}. More recently, a new (systematic) algorithmic framework was introduced called combinatorial exploration, which automatically and rigorously analyzes the structure of combinatorial objects and derives their counting sequences and generating functions~\cite{CEAF}.

A superset of permutations is the set of parking functions. Parking functions can be described using a deterministic parking process. Consider a parking lot with $n$ parking spots on a one-way street labeled $1$ to $n$. A sequence of $n$ cars enters the street one by one, from left to right, with car $i$ having a preferred spot $a_i$, which we call its parking preference. For each $i\in [n]$, car $i$ drives to its preferred spot $a_i$ and attempts to park. If the spot is not available, the car continues to drive down the street and parks in the next available spot, if one exists. If there is no available spot, the car exits the lot and is unable to park. We call this the parking rule, and the sequence $\alpha$ is called a \emph{parking function} if all cars can park under the parking rule. Konheim and Weiss established that there are $(n+1)^{n-1}$ parking functions of length $n$~\cite{konheim1966occupancy}. For a parking function $\alpha$, the order in which the cars park on the street is called the \emph{parking permutation} and is denoted by $\out(\alpha)$. If $\Sym_n$ denotes the set of permutations of $[n]$ written in one-line notation, then the parking permutation of $\alpha$ is 
\[\out(\alpha)=\pi_1\pi_2\cdots\pi_n,\]
where $\pi_i$ denotes that car $\pi_i$ parked in spot $i$ on the street. 

Since there is a many-to-one correspondence between parking functions and permutations, it is natural to study pattern avoidance in parking functions. Qiu and Remmel introduced a notion of pattern avoidance for parking functions by viewing them as labeled Dyck paths~\cite{QiuRemmel}. Adeniran and Pudwell~\cite{adeniranpudwell} further studied this notion and posed several open problems, which were subsequently resolved by Adenbaum~\cite{Adenbaum26}. Yan introduced a second notion of pattern avoidance in parking functions by studying the parking permutations~\cite{junyan}. These parking permutations record the order in which the cars park on the street. Yan enumerated all parking functions whose parking permutations avoid any collection of patterns of length $3$. In this paper, we use Yan's notion of pattern avoidance. 

The main point of the present article is to extend the Zeilberger/Vatter enumeration scheme for permutations to the setting of parking functions.
We adapt the enumeration scheme, which is based on prefixes for permutations, to instead use suffixes of parking permutations. As a proof of concept, we apply this framework to the illustrative case of $\{1234,2134\}$-avoiding parking permutations, systematically enumerate the corresponding parking functions, and obtain a recurrence that can be computed in polynomial time. In the Wilf sense, the corresponding permutation class is enumerated by the Schr\"{o}der numbers~\cite{Kremer}. More broadly, we hope that this approach can eventually be fully automated, as in the original setting of permutation classes, although the current framework still requires human assistance to identify and prove the necessary structural properties.
All data produced and used in our computations are collected on a public website available from~\cite{Maple}, where we also provide the three Maple packages accompanying this article.

This paper is organized as follows. In Section~\ref{sec:background}, we introduce the necessary background on parking functions and pattern avoidance. In Section~\ref{sec:scheme}, we reformulate the prefix enumeration scheme in terms of suffixes and extend it to parking enumeration schemes. We illustrate this approach by recovering Yan's enumeration~\cite[Theorem 2.28]{junyan} of parking functions whose parking permutations are $123$-avoiding. In Section~\ref{sec:mainresult}, we apply the resulting framework to the patterns $1234$ and $2134$, and we obtain the enumeration of parking functions whose parking permutations avoid these patterns.

\section{Background and Notation}\label{sec:background}

\subsection{Parking Functions}
Colmenarejo et al.~\cite{knaple} provide a technique for enumerating parking functions by counting through permutations. We restate their result below and use Lemma~\ref{lem:numberofprefsforeachcar} to construct sequences (Definition~\ref{def:pvector}), which we refer to as parking tables. These sequences play a role in the main result.

\begin{lemma}\label{lem:numberofprefsforeachcar}
Given a permutation $\pi=\pi_1\cdots\pi_n\in\Sym_n$, define
\[\ell(\pi_i)=\max\{k: \, \pi_j \leq \pi_i \text{ for all } i-k+1\leq j\leq i\}.\]
The number of parking functions with parking permutation $\pi$ is 
\[\prod_{i=1}^n \ell(\pi_i).\]
\end{lemma}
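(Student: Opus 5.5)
We will prove the count by showing that, once the parking permutation $\pi=\out(\alpha)$ is fixed, the preferences of the cars can be chosen independently of one another. Car $\pi_i$ then contributes exactly $\ell(\pi_i)$ admissible preferences, and the product formula follows.

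Fix $\pi\in\Sym_n$. Car $c=\pi_i$ is the car that must park in spot $i$. When car $c$ arrives, the spots already occupied are exactly
\[\{j : \pi_j<c\},\]
because cars arrive in the order $1,2,\dots,n$, and car $d$ ends up in spot $\pi^{-1}(d)$. Under the parking rule, car $c$ with preference $a_c$ parks in the first unoccupied spot that is $\geq a_c$.

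The key step is to show that car $c$ parks in spot $i$ if and only if two conditions hold: every spot in $a_c,a_c+1,\dots,i-1$ is occupied, and spot $i$ is unoccupied. Spot $i$ is unoccupied because car $c$ is the one that parks there. Spot $j$ is occupied exactly when $\pi_j<c=\pi_i$. So the admissible preferences are those $a$ with $a\le i$ and $\pi_j<\pi_i$ for all $a\le j\le i-1$. Equivalently, $\pi_j\le\pi_i$ for all $j$ with $a\le j\le i$, where the term $j=i$ holds with equality.

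By the definition of $\ell$, these admissible values are precisely $a=i-k+1$ for $k=1,\dots,\ell(\pi_i)$. Hence car $\pi_i$ has exactly $\ell(\pi_i)$ admissible preferences.

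It remains to justify independence, which is the only delicate point. We must confirm that the set of occupied spots seen by car $c$ depends only on $\pi$ and not on the other cars' preferences. We argue by induction on the car index $c$. Suppose cars $1,\dots,c-1$ each chose a preference admissible for $\pi$, so they parked in spots $\pi^{-1}(1),\dots,\pi^{-1}(c-1)$. Then the occupancy seen by car $c$ is exactly as claimed above. Consequently the map sending a preference sequence $\alpha$ to its parking behaviour induces a bijection
\[\{\alpha : \alpha \text{ a parking function}, \ \out(\alpha)=\pi\} \;\longleftrightarrow\; \prod_{c=1}^{n}A_c,\]
where $A_c$ is the set of admissible preferences for car $c$ and $|A_{\pi_i}|=\ell(\pi_i)$.

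Conversely, if some car $c$ picks a preference outside $A_c$, let $c$ be the first such car. Then car $c$ either parks in a spot other than $\pi^{-1}(c)$ or does not park at all. In either case $\out(\alpha)\neq\pi$, so no such $\alpha$ is counted. Taking the product of $|A_{\pi_i}|=\ell(\pi_i)$ over $i=1,\dots,n$ gives the formula.

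The main obstacle is this inductive bookkeeping, namely that the admissibility condition for each car depends only on $\pi$. Once this is set up, the rest is immediate from the definition of $\ell$.
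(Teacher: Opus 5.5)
Your proof is correct. The paper does not prove this lemma itself; it quotes it from Colmenarejo et al. and uses it as a black box, so there is no in-paper argument to compare against.

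What you give is the standard direct proof of the result:
\begin{enumerate}
\item Fix $\pi$.
\item Show by induction on arrival order that, provided earlier cars behaved as $\pi$ dictates, car $\pi_i$ sees exactly the spots $\{j:\pi_j<\pi_i\}$ occupied.
\item Conclude that its admissible preferences are exactly the interval from $i-\ell(\pi_i)+1$ to $i$. Since this set depends only on $\pi$, the parking functions with outcome $\pi$ form a product set.
\end{enumerate}
Your converse, that the first car with an inadmissible preference either parks in the wrong spot or fails to park, closes the argument.

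Two small facts are used silently in the step ``by the definition of $\ell$'' and deserve a phrase each. First, the set of window lengths $k$ satisfying the condition defining $\ell(\pi_i)$ contains $k=1$ and is downward closed, so the valid $k$ are exactly $1,\dots,\ell(\pi_i)$. Second, $k\le i$ is implicit in that definition, so every admissible $a=i-k+1$ lies in $[n]$.
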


\begin{example}\label{ex: count from 23154}
Let $n=5$ and $\pi=23154$ be the permutation that parks the cars on the street. Then, $\ell(\pi_1)=1, \ell(\pi_2)=2, \ell(\pi_3)=1, \ell(\pi_4)=4$ and $\ell(\pi_5)=1$. Moreover, the product of $1\cdot 2 \cdot 1 \cdot 4 \cdot 1=8$ is the total number of parking functions with parking permutation $\pi=23154$.
\end{example}

The following definition constructs sequences from Lemma~\ref{lem:numberofprefsforeachcar}.
\begin{definition}\label{def:pvector}
Let $\pi=\pi_1\pi_2\cdots\pi_n\in\Sym_n$. For all $i\in[n]$, define the sequence $p=(p_1,p_2,\ldots,p_n)\in [n]^n$ by
\[p_i=\ell(\pi_i)=\max\{k: \, \pi_j \leq \pi_i \text{ for all } i-k+1\leq j\leq i\}.\] 
We call $p$ the \emph{parking table} and often write $p$ in one-line notation.
\end{definition}

\subsection{Pattern Avoidance}\label{subsec:patternavoid}
For $m\leq n$ and $\sigma \in \Sym_m, \pi\in \Sym_n$, we say that $\pi$ \emph{contains} $\sigma$ as a pattern if there exists $1\leq i_1< \cdots <i_m\leq n$, such that $\pi(i_a)<\pi(i_b)$ if and only if $\sigma(a)<\sigma(b)$ for all $a,b\in [m]$, and we say $\pi$ \emph{avoids} $\sigma$ otherwise. For any collection $\sigma_1,\ldots, \sigma_k$ of permutations, $\Av_n(\sigma_1,\ldots,\sigma_k)$ denotes the set of all permutations in $\Sym_n$ containing none of the permutations $\sigma_1,\ldots,\sigma_k$ as a pattern. For two permutations $\sigma, \tau \in \Sym_k$, we say that $\sigma$ and $\tau$ are \emph{Wilf equivalent} if $|\Av_n(\sigma)|=|\Av_n(\tau)|$ for all $n\geq k$.

An equivalent characterization of pattern containment is given in terms of ``reduction'' or ``standardization'' of a word. This perspective is particularly useful in our later enumeration arguments for the parking analogs of the Schr\"{o}der numbers.

\begin{definition}\label{def:reduction of perm}
Let $[k]^n$ denote the set of words of length $n$ in the alphabet $\{1,2,\ldots,k\}$. Let $w\in [k]^n$ and $w=w_1w_2\cdots w_n$. The reduction (or standardization) of $w$, denoted $\red(w)$, is the unique word of length $n$ obtained by replacing the $i^{th}$ smallest distinct letter of $w$ with $i$, for each $i$. We refer to $\red(w)$ as the \emph{reduced permutation}.
\end{definition}

\begin{example}\label{ex:reductionex1}
If $\pi=23154$, deleting the entry $\pi_3=1$ yields the word $2354$, whose reduced permutation is $\red(2354)=1243$.
\end{example}

The following equivalent definition of pattern containment will be convenient.

\begin{definition}
Let $\pi=\pi_1\pi_2\cdots\pi_n\in \Sym_n$ and let $\sigma=\sigma_1\sigma_2\cdots\sigma_k\in \Sym_k$. We say that $\pi$ contains $\sigma$ if there exist $1\leq i_1<i_2<\cdots <i_k\leq n$ so that $\red(\pi_{i_1}\pi_{i_2}\cdots\pi_{i_k})=\sigma$. Otherwise $\pi$ avoids $\sigma$.
\end{definition}

\begin{claim}\label{claim:reduavoid}
Let $k\leq n \in \NN$ and $\Pi$ be a collection of patterns. If $\pi\in \Av_n(\Pi)$ then $\red(w)\in \Av_{k}(\Pi)$ for any subsequence $w$ of $\pi$ of length $k$.
\end{claim}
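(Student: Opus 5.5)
The plan is a proof by contrapositive, using the characterization of containment via reduction. Let $w=\pi_{j_1}\pi_{j_2}\cdots\pi_{j_k}$ be a subsequence of $\pi$ with $j_1<\cdots<j_k$, and put $\tau=\red(w)\in\Sym_k$. Since $\pi$ is a permutation, the entries of $w$ are distinct, so $\tau$ really is a permutation of $[k]$. Suppose, for contradiction, that $\tau$ contains some $\sigma\in\Pi$ of length $m\le k$. I will show that $\pi$ then contains $\sigma$, contradicting $\pi\in\Av_n(\Pi)$.

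The key step is that reduction preserves relative order. Write $\red(w)=\rho(w_1)\cdots\rho(w_k)$, where $\rho$ maps the set of letters of $w$ bijectively onto $[k]$ and is order preserving: $w_a<w_b$ if and only if $\rho(w_a)<\rho(w_b)$. This follows directly from Definition~\ref{def:reduction of perm}, since the $i$th smallest letter is sent to $i$.

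Now take an occurrence of $\sigma$ in $\tau$, namely indices $1\le a_1<\cdots<a_m\le k$ with $\red(\tau_{a_1}\cdots\tau_{a_m})=\sigma$. Set $i_t=j_{a_t}$; these indices are strictly increasing in $[n]$. For all $s,t$ we have $\pi_{i_s}<\pi_{i_t}$ if and only if $\tau_{a_s}<\tau_{a_t}$, by the order-preservation of $\rho$. Hence $\pi_{i_1}\cdots\pi_{i_m}$ and $\tau_{a_1}\cdots\tau_{a_m}$ have the same relative order, and therefore the same reduction $\sigma$. So $\pi$ contains $\sigma$, which is the desired contradiction. Consequently $\tau\in\Av_k(\Pi)$.

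I do not expect a real obstacle here. The only point needing care is the identity $\red(u)=\red(u')$ whenever $u$ and $u'$ have the same pairwise comparisons. This is clear: the reduction depends only on the rank of each letter among the letters of the word, and those ranks are determined by the pairwise comparisons.
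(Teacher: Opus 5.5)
Your proof is correct. It rests on three points: the entries of $w$ are distinct, so $\red(w)\in\Sym_k$; the relabeling $\rho$ is order preserving; and composing indices via $a_t\mapsto j_{a_t}$ transports an occurrence of $\sigma$ in $\red(w)$ to an occurrence of $\sigma$ in $\pi$. The paper states this claim without proof, treating it as a standard fact, and your argument is the standard justification it implicitly relies on.
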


Notice that the parking table sequences can be defined for words with distinct values. If $w$ is a word with distinct values and parking table $p$, then its reduced permutation also yields the same parking table $p$. 

\begin{claim}\label{lem:parkingtables}
Let $w\in[k]^n$ with distinct elements and $\sigma\in \Sym_n$. If $\sigma=\red(w)$ for a word $w$ with distinct elements then the parking tables of $w$ and $\sigma$ coincide.
\end{claim}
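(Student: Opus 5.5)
The plan is to reduce the claim to the observation that the parking table depends only on the relative order of the entries. For a word $w=w_1w_2\cdots w_n$ with distinct entries, the parking table is defined exactly as in Definition~\ref{def:pvector}:
\[p_i(w)=\max\{k:\, w_j\le w_i \text{ for all } i-k+1\le j\le i\}.\]
This quantity is determined entirely by the truth values of the comparisons $w_j\le w_i$ for $j\le i$. So it suffices to show that $\red$ preserves all of these comparisons.

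First I will make the map underlying $\red$ explicit. Let $v_1<v_2<\cdots<v_n$ be the distinct letters of $w$; there are exactly $n$ of them, since the entries are distinct. Define $\phi(v_r)=r$. By Definition~\ref{def:reduction of perm}, $\sigma=\red(w)$ satisfies $\sigma_i=\phi(w_i)$. The map $\phi$ is a strictly increasing bijection from $\{v_1,\ldots,v_n\}$ onto $[n]$. Hence $\sigma\in\Sym_n$, and for all $i,j$,
\[w_j\le w_i \iff \phi(w_j)\le\phi(w_i) \iff \sigma_j\le\sigma_i.\]

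Next I will fix $i\in[n]$ and $k\ge 1$ with $i-k+1\ge 1$. The condition "$w_j\le w_i$ for all $i-k+1\le j\le i$" holds if and only if "$\sigma_j\le\sigma_i$ for all $i-k+1\le j\le i$" holds. Therefore the two sets of admissible $k$ whose maxima define $p_i(w)$ and $p_i(\sigma)$ are identical. Both sets contain $k=1$, since the condition $j=i$ is trivially satisfied, and both are bounded by $i$. So the maxima exist and agree, giving $p_i(w)=p_i(\sigma)$ for every $i$.

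There is no real obstacle here. The only point needing care is stating clearly that the parking table of a word is given by the same formula as for permutations, and that distinctness makes $\phi$ strictly increasing. Without distinctness, ties would be collapsed, and the reduced word would no longer be a permutation in $\Sym_n$.
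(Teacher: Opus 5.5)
Your proof is correct and follows the only reasoning the paper relies on. The paper gives no proof of this claim; it takes it as evident that a word with distinct letters has a parking table determined by the relative order of its entries. Your argument spells this out: reduction is a strictly increasing relabeling, so every comparison $w_j\le w_i$ is preserved, and hence the set whose maximum defines each $p_i$ is the same for $w$ and $\sigma$.
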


\begin{example}[Continuing Example~\ref{ex:reductionex1}]
The parking table of the word $2354$ is $p=(1,2,3,1)$ which is the same parking table of its reduced permutation $1243$.
\end{example}

\begin{definition}\label{def:suffix}
Let $\pi=\pi_1\cdots \pi_n\in \Sym_n$ be a parking permutation and $p=p_1p_2\cdots p_n\in[n]^n$ be its parking table. For $1\leq  k \leq j \leq n$, we call a pair $(\tau, p')$ a \emph{bi-suffix} of $\pi$ if \[\tau=\pi_k\pi_{k+1}\cdots \pi_j \in[n]^{j-k+1},\] and \[p'=p_kp_{k+1}\cdots p_j \in [n]^{j-k+1}.\]
\end{definition}

\begin{example}\label{ex:suffix1}
Let $\pi=2471635 \in \Sym_7$ and $k=4,j=7$. The parking table of $\pi$ is given by $p=1231212$. Then the bi-suffix of $\pi$ is the tuple $(\tau,p)$ where $\tau=1635$ and $p=1212$.
\end{example}

\begin{lemma}
Given $\pi=\pi_1\cdots\pi_n\in \Sym_n$ with bi-suffix $(i,r)$, where $\pi_n=i$ and $\ell(\pi_n=i)=r$. Let $w\in[n]^{n-1}$ be the word obtained by deleting $\pi_n=i$ and $\red(w)=\tau=\tau_1\cdots\tau_{n-1}\in \Sym_{n-1}$.
Then
\[ \prod_{i=1}^n \ell(\pi_i)=r\prod_{i=1}^{n-1}\ell(\tau_i).\]
In other words, the number of parking functions with parking permutation $\pi$ is the same as the product of $r$ and the number of parking functions with parking permutation $\tau$.
\end{lemma}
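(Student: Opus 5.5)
The plan is to show directly that deleting the last entry does not change any of the earlier parking-table values. Once that is established, the product formula of Lemma~\ref{lem:numberofprefsforeachcar} factors as claimed.

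Let $w=\pi_1\cdots\pi_{n-1}$ and $\tau=\red(w)$. The key observation is that for each $i\le n-1$, the quantity $\ell(\pi_i)$ depends only on the entries $\pi_1,\dots,\pi_i$ and on their relative order. By definition,
\[\ell(\pi_i)=\max\{k:\pi_j\le\pi_i \text{ for all } i-k+1\le j\le i\}\]
looks only at positions $j\le i\le n-1$, so it never involves the deleted entry $\pi_n$. Hence the parking table of $\pi$, restricted to its first $n-1$ entries, equals the parking table of the word $w$ (a word with distinct letters, for which the parking table is defined in the same way).

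Next, Claim~\ref{lem:parkingtables} says that the parking table of $w$ coincides with that of $\red(w)=\tau$. This holds because reduction preserves all comparisons $\pi_j\le\pi_i$. Therefore $\ell(\pi_i)=\ell(\tau_i)$ for $1\le i\le n-1$.

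Finally, we split off the last factor in Lemma~\ref{lem:numberofprefsforeachcar}:
\[\prod_{i=1}^n\ell(\pi_i)=\ell(\pi_n)\prod_{i=1}^{n-1}\ell(\pi_i)=r\prod_{i=1}^{n-1}\ell(\tau_i).\]
Applying Lemma~\ref{lem:numberofprefsforeachcar} again to $\tau\in\Sym_{n-1}$, the second product is the number of parking functions with parking permutation $\tau$. This gives the stated interpretation.

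The main point needing care is the first step. The window in $\ell(\pi_i)$ extends only to the left of position $i$, and $\pi_n$ lies to the right of every other position. So deleting $\pi_n$ leaves every window intact, and no index shifting occurs. This is exactly why suffixes, rather than prefixes, are the right thing to peel off here. Beyond this, the argument is routine.
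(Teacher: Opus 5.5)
Your proof is correct and follows the same route as the paper: the first $n-1$ parking-table entries of $\pi$ agree with those of $w$, which agree with those of $\tau=\red(w)$ by Claim~\ref{lem:parkingtables}, and then the factor $\ell(\pi_n)=r$ is split off. You also spell out a step the paper leaves implicit, namely that every window defining $\ell(\pi_k)$ for $k\le n-1$ lies to the left of position $n$, so deleting $\pi_n$ leaves those values unchanged.
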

\begin{proof}
This follows by applying Lemma~\ref{lem:parkingtables} to $w=\pi_1\pi_2\cdots\pi_{n-1}$ and $\red(w)=\tau$. Since the parking tables of both $w$ and $\tau$ are the same, then 
\[\prod_{i=1}^{n-1}\ell(\tau_i)=\prod_{i=1}^{n-1} w_i= \prod_{i=1}^{n-1} \ell(\pi_i).\]
Thus, \[ \prod_{i=1}^n \ell(\pi_i)=\ell(\pi_n)\prod_{i=1}^{n-1}\ell(\tau_i)=r\prod_{i=1}^{n-1}\ell(\tau_i).\]
\end{proof}

\begin{definition}
Let 
$\Pi=\{\sigma_1,\sigma_2,\ldots, \sigma_k\}$ be collection of patterns. For $m\geq 1$, let $B_n^{(m)}(\Pi)$ denote the set of all bi-suffixes $(\tau, p')$, with $\tau, p'\in [n]^m$, that arise from permutations $\pi \in \Av_n(\Pi)$ and its corresponding parking table $p\in[n]^n$. That is,  
\[ 
\Bn{n}{m}{\Pi}=\{(\tau, p'): \, (\tau, p') \text{ is a bi-suffix of } \pi \in \Av_n(\Pi) \text{ with parking table } p \}.
\]
We refer to $\tau$ and $p'$ as the permutation component and parking table component, respectively.

For each $(\tau,p')\in \Bn{n}{m}{\Pi}$, we define
\[
\Bn{n}{m+1}{\Pi;\, (\tau,p')}
=
\left\{
(\mu,q):
\begin{array}{l}
(\mu,q)\text{ is a bi-suffix of }\pi\in\Av_n(\Pi)
\text{ with parking table }p,\\
\mu_2\cdots\mu_{m+1}=\tau
\text{ and }
q_2\cdots q_{m+1}=p'
\end{array}
\right\}.
\]

\textit{Warning:} We abuse notation whenever $\tau=i$ and $p'=r$ are length $1$ and simply write $(i,r)$ instead of $((i), (r))$.
\end{definition}

\begin{example}
Let $n=4$ and $\pi=2413\in \Av_4(123)$ with parking table $p=1212$. Then, $(3,2)$ is a member in the set $\Bn{4}{1}{123}$ while $((1,3), (1,2))$ is a member in the set $\Bn{4}{2}{123;\, (3,2)}$.
\end{example}

\section{From Permutation Enumeration Schemes to Parking Enumeration Schemes}\label{sec:scheme}
We review the scheme first described by Zeilberger~\cite{Zeilberger98} and later improved by Vatter~\cite{Vatter08}. We do not go into the details of the enumeration scheme, but we follow the notation used in Zeilberger's survey~\cite{Zeilberger06}. 
The goal of an enumeration scheme is to describe a class of avoiding permutations in terms of smaller permutations in the class via recursion. The original scheme method used prefixes. We implement Vatter's Maple package so that we can use suffixes, which is available in~\cite{Maple}. Using suffixes allows us to analyze the last elements of the permutation along with the last elements of the corresponding parking table.

We begin with an example of the enumeration scheme for the $123$-avoiding class and illustrate how to adapt the enumeration scheme to parking functions by recovering Yan's result~\cite[Theorem 2.28]{junyan}. Although we do not include the details here, we also recover the enumeration of parking functions whose parking permutations are $\{123,132\}$-avoiding and of those whose parking permutations are $\{123,231\}$-avoiding (see \texttt{PWilfCases.txt} in~\cite{Maple}).
 
\subsection{The $123$-avoiding permutation scheme}\label{subsec:123perm}
Let $A(n)$ be the set of $123$-avoiding permutations, i.e. $A(n)\coloneq \Av_n(123)$.
We can describe the set $A(n)$ by taking collections of the permutations fixed by their last element. Precisely, define
\[A_1(n,i)\coloneq \{\pi \in A(n): \, \pi_n=i\}. \]
Hence, \[ A(n)= \bigcup_{i=1}^n A_1(n,i),\]
which is the union of all permutations that end with the element $i$ for all $i\in[n]$.

Moreover, the last two elements of any $123$-avoiding permutation form a word in the alphabet $[n]$ of length $2$. If $n\geq 2$, then we can define the following two sets according to the reduction of the last two elements,
\[A_{12}(n,j,i)\coloneq \{\pi\in A(n): \, \pi_{n-1}=j, \pi_{n}=i, j<i\}, \]
and
\[A_{21}(n,h,i)\coloneq \{\pi\in A(n): \, \pi_{n-1}=h, \pi_n=i, h>i\}. \]
Here, $A_{12}(n,j,i)$ is the set of all permutations whose last two elements end with $ji$ and $j<i$. Similarly, $A_{21}(n,h,i)$ is the set of all permutations whose last two elements end with $hi$ and $h>i$. 

In the enumeration scheme, the goal is to keep describing sets in terms of collections of smaller sets until the enumeration is equivalent for some collections. If $i=1$ then $A_1(n,i)$ is equinumerous to the set $A(n-1)$ since if $\pi\in A_1(n,i)$ then removing $i$ from $\pi$ yields a word $w$ of length $n-1$ whose reduction $\red(w)$ is a permutation of length $n-1$. The same argument follows if $i=2$. In both cases, both reductions avoid the pattern $123$~(Claim~\ref{claim:reduavoid}).

If $i\geq 3$, then we have
\[A_1(n,i)=\left(\bigcup_{j=1}^{i-1} A_{12}(n,j,i)\right)\cup \left(\bigcup_{h=i+1}^{n} A_{21}(n,h,i)\right).\]

We show that $|A_{21}(n,h,i)|=|A_1(n-1,h-1)|$. Let $\pi \in A_{21}(n,h,i)$ then removing the element $i$ from $\pi$ results in the word $w=\pi_1\pi_2\cdots\pi_{n-2}h$ of length $n-1$, and $\red(w)$ replaces every element $k$ in $w$ by $k-1$ whenever $k>i$. Thus, the reduced permutation ends with $h-1$ because $k=h$ and $h>i$. In addition, the reduced permutation avoids the pattern $123$~(Claim~\ref{claim:reduavoid}).
Thus, the set $A_{21}(n,h,i)$ is equinumerous with the set $A_1(n-1,h-1)$ for $i+1\leq h \leq n$, or $|A_{21}(n,h,i) |= |A_1(n-1,h)|$ with $i\leq h \leq n-1$.

For the set $A_{12}(n,j,i)$, the suffix scheme relies on \emph{gap vectors}. Gap vectors give a condition for which choices of elements in the suffix yield empty permutations. Precisely, if $\pi=\pi_1\pi_2\cdots\pi_j i_1i_2i_3\cdots i_{n-j}$ then a gap vector provides information on the possible values of the elements $i_1,i_2,\ldots, i_{n-j}$. In our example, we obtain the information that $A_{12}(n,j,i)\neq \emptyset $ if and only if $j=1$. Hence, if $j=1$ and $\pi\in A_{12}(n,j,i)$, then removing the element $j=1$ from $\pi$ results in the word $w=\pi_1\pi_2\cdots i$ of length $n-1$, and $\red(w)$ replaces every element $k$ in $w$ by $k-1$ whenever $k>j$. Thus, the reduced permutation ends with $i-1$ because $k=i$ and $i>j=1$. In addition, the reduced permutation avoids the pattern $123$~(Claim~\ref{claim:reduavoid}). Thus, the set $ A_{12}(n,j,i)$ is equinumerous to $A_1(n-1,i-1)$.

Therefore, if $i\geq 3$, then
\[|A_1(n,i)|=\sum_{h=1}^{n-1} |A_1(n-1,h)| + |A_1(n-1,i-1)|, \]
otherwise $i=1$ or $i=2$ then $|A_1(n,i)|=|A(n-1)|$. 
Since the enumeration scheme for the $123$-avoiding permutations only checked up to the last two elements in the permutation, we say that the enumeration scheme has depth $2$. 
\begin{proposition}\label{prop:123recurrence}
Let $a(n)$ be the number of $123$-avoiding permutations and $a(n,i)$ be those permutations that end with the element $i$. Then
\[a(n,i)=
\begin{cases}
    a(n-1) & \text{if } i=1 \text{ or } i=2 \\
    a(n-1,i-1)+\sum_{j=i}^{n-1} a(n-1,j) & \text{otherwise}
\end{cases}
\]
with initial conditions $a(1,1)=1$ and $a(1,i)=0$ if $i\geq 2$.
\end{proposition}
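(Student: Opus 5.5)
We will prove the recurrence by constructing explicit bijections between each class $A_1(n,i)$ and smaller classes, as in the scheme discussion above. Fix $n\geq 2$; the case $n=1$ is the stated initial condition.

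\textbf{The cases $i=1$ and $i=2$.} Let $\pi\in A_1(n,i)$ and delete the last entry $\pi_n=i$. Reduce the remaining word $w=\pi_1\cdots\pi_{n-1}$ to obtain $\red(w)\in\Sym_{n-1}$. By Claim~\ref{claim:reduavoid}, $\red(w)$ avoids $123$. Conversely, take any $\tau\in A(n-1)$. Increase every entry $\geq i$ by one and append $i$. The resulting permutation avoids $123$, because a $123$ occurrence would have to end at $i$. That would require two earlier entries smaller than $i$ forming an increasing pair. When $i\leq 2$ at most one value smaller than $i$ exists, so this is impossible. The two maps are mutually inverse, giving $a(n,i)=a(n-1)$.

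\textbf{The case $i\geq 3$.} Split $A_1(n,i)$ by the relative order of $\pi_{n-1}$ and $\pi_n$, into the sets $A_{21}(n,h,i)$ with $h>i$ and $A_{12}(n,j,i)$ with $j<i$.

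For the descent case, deleting $i$ and reducing maps $A_{21}(n,h,i)$ into $A_1(n-1,h-1)$. The inverse map reinserts $i$ at the end and shifts up every value $\geq i$. Since the new last entry $i$ is preceded by the larger entry $h$, we must check that no $123$ is created ending at $i$. A $123$ ending at $i$ uses two earlier entries $a<b<i$ with $a$ before $b$. Those entries already lie in $\tau$, and since $\tau$ ends with $h-1$, the triple $a,b,h-1$ is an occurrence of $123$ in $\tau$. This contradicts $\tau\in A(n-1)$, so the inverse is well defined. Summing over $h=i+1,\dots,n$ and reindexing gives $\sum_{j=i}^{n-1}a(n-1,j)$.

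For the ascent case, we first prove the gap-vector fact that $A_{12}(n,j,i)\neq\emptyset$ forces $j=1$. If $j\geq 2$, the value $1$ appears at some position before $n-1$, and then $1,j,i$ is a $123$ occurrence. Hence only $j=1$ contributes. For $j=1$, deleting the entry $1$ in position $n-1$ and reducing gives a permutation of $A_1(n-1,i-1)$. For the inverse, take $\tau\in A_1(n-1,i-1)$, shift every value up by one, and insert $1$ just before the last entry. A $123$ in the result either avoids the inserted $1$, and so lies in $\tau$, or uses $1$ as its smallest entry. In the latter case the only later entry is the last one, so no such pattern exists. This gives $|A_{12}(n,1,i)|=a(n-1,i-1)$.

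Adding the two contributions yields the stated formula. The main obstacle is the inverse maps, that is, checking that reinsertion never creates a $123$. Surjectivity is exactly where the conditions $i\leq 2$, the descent before $i$, and $j=1$ are used.
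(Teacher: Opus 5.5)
Your proof is correct and takes essentially the same route as the paper. In each case you delete the last entry (for $i\le 2$ and for the descent case $h>i$) or the $1$ in position $n-1$ (for the ascent case) and reduce, using that an ascent $j<i$ with $i\ge 3$ forces $j=1$; you go slightly beyond the paper by proving that gap-vector fact directly and by checking the inverse maps (that reinsertion never creates a $123$), which the paper leaves implicit when it asserts equinumerosity.
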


\subsection{The $123$-avoiding parking scheme}
The goal of the parking enumeration scheme is to determine how the recurrence of the permutation scheme changes with the parking tables. A natural question is whether there exists constants coefficients, depending on the values $n$ and $i$, for the recurrence in Proposition~\ref{prop:123recurrence}.

We begin with a characterization of the suffixes for the $123$-avoiding permutations. 
\begin{lemma}
The set of bi-suffixes in $\Bn{n}{1}{123}$ is given by
\[
\Bn{n}{1}{123}=\{(i,r): \, 1\leq r \leq i \leq n-1\} \cup \{(n,n)\}.
\]
In other words, if a $123$-avoiding parking permutation ends in $i<n$, then the corresponding parking table can end in any $r\in [i]$, while if it ends in $n$, the parking table necessarily ends in $n$.
\end{lemma}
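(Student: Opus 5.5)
The statement asserts an equality of two sets, so I will prove the two inclusions separately. Throughout I use only Definition~\ref{def:pvector}: for $\pi\in\Av_n(123)$, the last entry of the parking table is $p_n=\ell(\pi_n)$, which is the length of the longest block of consecutive entries ending at position $n$ whose entries are all at most $\pi_n$.

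\emph{Inclusion $\subseteq$.} Let $(i,r)\in\Bn{n}{1}{123}$ come from $\pi$ with $\pi_n=i$ and $p_n=r$. If $i=n$, every entry of $\pi$ is at most $n$, so the block extends to position $1$ and $r=n$. If $i<n$, the $r$ entries $\pi_{n-r+1},\dots,\pi_n$ are distinct and all lie in $[i]$, which forces $r\le i$. Also $r\ge 1$, since the block always contains $\pi_n$ itself. In neither case is pattern avoidance needed.

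\emph{Inclusion $\supseteq$.} Here I give explicit $123$-avoiding witnesses. For $(n,n)$, take
\[
\pi=(n-1)(n-2)\cdots 1\,n .
\]
An occurrence of $123$ would need an increasing pair before $n$, but the prefix is decreasing, so $\pi$ avoids $123$. Its last parking table entry is $n$.

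For $1\le r\le i\le n-1$, take the concatenation of four blocks
\[
\pi=\underbrace{(i-1)(i-2)\cdots r}_{D_1}\;\underbrace{n(n-1)\cdots(i+1)}_{D_2}\;\underbrace{(r-1)(r-2)\cdots 1}_{D_3}\; i ,
\]
where $D_1$ or $D_3$ may be empty. The entries are exactly $[n]$, so $\pi$ is a permutation.

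\emph{Checking avoidance.} Each of $D_1,D_2,D_3$ is decreasing, so an increasing triple must take its three entries from different blocks, or use the final entry $i$. Every entry of $D_3$ is smaller than every entry of $D_1\cup D_2$, so nothing in $D_1\cup D_2$ can be followed by a larger entry of $D_3$. Every entry of $D_2$ exceeds $i$, so nothing in $D_2$ is followed by a larger later entry. Consequently the only increasing pairs are $D_1\to D_2$, $D_1\to i$ and $D_3\to i$. None of these pairs can be extended to an increasing triple, so $\pi$ avoids $123$.

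\emph{Computing the last parking table entry.} The final $r$ entries are $D_3$ followed by $i$, and all of them are at most $i$. Since $i<n$, the block $D_2$ is nonempty and ends in $i+1>i$, immediately before $D_3$. Hence the maximal block ends at position $n$ with length exactly $r$, so $\ell(\pi_n)=r$, as required.

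The only real obstacle is finding this construction: the block before the final $r$ entries must end in a value larger than $i$ without creating a $123$. Placing the leftover small values $D_1$ first and the large values $D_2$ second achieves this.
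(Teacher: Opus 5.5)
Your proof is correct and follows the paper's argument: the same bound $r\le i$ for the forward inclusion, and the same four-block witness $(i-1)\cdots r\,n\cdots(i+1)\,(r-1)\cdots 1\,i$ for the reverse inclusion. You are also slightly more careful than the paper in two places: you give an explicit witness $(n-1)\cdots 1\,n$ for $(n,n)$, which the paper leaves implicit, and you rule out $123$ by listing all increasing pairs.
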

\begin{proof}
For any permutation $\pi\in \Av_n(123)$, the last element $\pi_n=i$ can range from any value from $1$ to $n$.

If $\pi_n=n$ then all elements to the left of $\pi_n$ are less than $n$ so $\ell(\pi_n)=p_n=n$ in the parking table. Hence, $(n,n)\in \Bn{n}{1}{123}$. 
If $\pi_n=i$ and $i<n$, by definition, if $\ell(\pi_n)=r$ then there are $r-1$ elements to the left of $\pi_n$ that are all at most $i$. Since there are only $i$ elements of $[n]$ that are at most $i$, we have $r\leq i$. Thus, $(i,r)\in \Bn{n}{1}{123}$ for $1\leq r\leq i \leq n-1$.

We now assume $ 1\leq r\leq i\leq n-1$ and construct a $123$-avoiding permutation whose last element $\pi_n=i$ and whose last entry in the parking table $p_n=r$.
Consider 
\[\pi= (i-1)(i-2)\cdots r n (n-1)\cdots (i+1)(r-1)(r-2)\cdots 1 i.\]
Notice that $\pi$ can be decomposed into four blocks. The first block $B_1$ contains the elements $r, r+1, \ldots , i-2, i-1$. If $i=r$, then $B_1$ is an empty block, otherwise, if $r<i$ then there are $i-r$ elements in block $B_1$ in decreasing order.
The second block $B_2$ contains the elements $i+1,i+2,\ldots, n-1, n$ in decreasing order, and there are $n-i$ of them. The third block $B_3$ contains the elements $1,2,\ldots, r-2,r-1$ in decreasing order and there are $r-1$ of them. The last block $B_4$ is the last element $i$. Altogether, there are $n$ elements in the permutation $\pi$.

Furthermore, $\ell(\pi_n)=r$ because the first element $j$ to the left of $\pi_n=i$ such that $j>i$ is at position $n-r$. Mainly, $\pi_{n-r}=i+1$ and $\pi_{n-r}>\pi_n=i$. For any $n-r+1\leq k \leq n-1 $, $\pi_k \leq r-1 < i$ and so $p_n=\ell(\pi_n)=r$.
Moreover, $\pi$ avoids the pattern $123$. Indeed, elements in blocks $B_1$, $B_2$ and $B_3$ are decreasing. Also, for any $b_1\in B_1, b_2\in B_2$ and $b_3\in B_3$, $b_3<b_1<b_2$ and $b_1<i<b_2$. Thus, there are no increasing subsequences as $b_3<b_1<i<b_2$.
\end{proof}

\begin{example}\label{ex:bisuffixlength1}
For $n=3$, there are five $123$-avoiding permutations, and for each, we list the bi-suffixes below.
\begin{table}[H]
    \centering
    \begin{tabular}{c|c|c}
       Permutation $\pi$ & Parking table $p$ & $(i,r)$ where $i=\pi_3$ and $r=p_3$ \\\hline 
        $132$ & $121$ & $(2,1)$\\
        $213$ & $113$ & $(3,3)$ \\
        $231$ & $121$ & $(1,1)$ \\
        $312$ & $112$ & $(2,2)$ \\
        $321$ & $111$ & $(1,1)$
    \end{tabular}
    \label{tab:ex1suffix}
\end{table}
Hence, $\Bn{3}{1}{123}=\{(1,1), (2,1), (2,2), (3,3)\}$.
\end{example}

It turns out that we can track down the last two elements of the parking permutation and parking table provided that the last two elements of both are known. By tracking down the last two elements of the parking permutation and parking table, we can obtain a recurrence that counts the number of parking functions whose parking permutations avoid the pattern $123$.

\begin{lemma}\label{lem:lengthtwopairssuffixes}
Given $(i,r)\in \Bn{n}{1}{123}$, the set of pairs $(j,s)$ for which
\[((j,i),(s,r)) \in \Bn{n}{2}{123;\, (i,r)}\]
is characterized as follows:
\[
\{(j,s): \, ((j,i),(s,r))\in \Bn{n}{2}{123;\, (i,r)}\}=\begin{cases}
   \{(j,s): \,  1\leq s <j, i< j <n   \} \cup \{(n,n-1)\} & \text{if } r=1\\
   \{(1,1)\} & \text{otherwise.}
\end{cases}
\]
\end{lemma}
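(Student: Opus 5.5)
The plan is to fix the last pair $(i,r)$ with $\pi_n=i$, $p_n=r$, and read off what this forces on $\pi_{n-1}=j$ and $p_{n-1}=s=\ell(j)$. There are two directions: necessity (every $(j,s)$ that occurs lies in the stated set) and sufficiency (every $(j,s)$ in the set is realized by some $\pi\in\Av_n(123)$). The split is by whether $r\ge 2$ or $r=1$. By the definition of $\ell$, $r\ge 2$ holds exactly when $\pi_{n-1}<\pi_n$. Note that $(n,n)$ falls into the case $r\ge 2$, since $n\ge 2$ whenever a length-two suffix exists.

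\emph{Case $r\ge 2$.} Here $j<i$. Any entry $\pi_k<j$ with $k<n-1$ would give the occurrence $\pi_k\, j\, i$ of $123$. Hence every value smaller than $j$ must appear after position $n-1$, and the only such position holds $i>j$. So no value is smaller than $j$, i.e. $j=1$, and then $s=\ell(1)=1$. For existence, I use the permutation from the proof of the previous lemma on $\Bn{n}{1}{123}$:
\[(i-1)\cdots r\, n(n-1)\cdots(i+1)(r-1)\cdots 1\, i.\]
When $r\ge 2$ its penultimate entry is $1$. That proof already shows it avoids $123$ and has $p_n=r$. For $(n,n)$ I take $(n-1)(n-2)\cdots 1\, n$ instead.

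\emph{Case $r=1$.} Here $j>i$, so $i<n$. If $j=n$, all $n-2$ earlier entries are smaller than $n$, so $s=n-1$. If $i<j<n$, then $s\le j$ by the same counting as in the previous lemma. In fact $s\le j-1$: the $s-1$ entries immediately before $j$ are all less than $j$, but $i<j$ sits after $j$, so at most $j-2$ values below $j$ are available. This gives necessity. For existence with $i<j<n$ and $1\le s<j$, let $C$ be the $s-1$ smallest values of $[j-1]\setminus\{i\}$, which exist because $s-1\le j-2$. Let $D$ be the remaining values below $n$ other than $i$, $j$, $C$ and those exceeding $j$, and let $E=\{j+1,\dots,n\}$, which is nonempty since $j<n$. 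Then set
\[\pi=(D\text{ decreasing})(E\text{ decreasing})(C\text{ decreasing})\, j\, i.\]
We have $\ell(j)=s$, because the entry just before the block $C$ lies in $E$ and exceeds $j$. We have $\ell(i)=1$ because $j>i$. For $(n,n-1)$ I take $\pi=(n-1)\cdots(i+1)(i-1)\cdots 1\, n\, i$.

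The main obstacle is checking that the constructed $\pi$ avoids $123$. The blocks $D$, $E$, $C$ are each decreasing, so an increasing triple must use entries from different blocks or the final two entries $j,i$. The key points are as follows. Every element of $C$ is smaller than every element of $D$, so no triple can start in $D$ and then climb through $C$. Nothing after $E$ exceeds $j$, so an ascent into $E$ cannot be extended. Finally, $j$ is followed only by the smaller value $i$. I would verify these inequalities block by block, including the degenerate cases where $C$ or $D$ is empty.
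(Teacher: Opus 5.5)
Your proof is correct and follows the same route as the paper. Necessity uses the split $j<i$ (forcing $j=1$ and $s=1$) versus $j>i$ (giving $s=n-1$ when $j=n$, and the counting bound $1\le s\le j-1$ otherwise), and sufficiency uses explicit decreasing-block permutations. Your version is in fact more complete. The paper's witness $n\cdots(j+1)(j-1)\cdots(i+1)(i-1)\cdots 1\,j\,i$ only realizes $s=j-1$, which is your $D=\emptyset$ case, and the paper's proof gives no witnesses for $r\ge 2$. Your $D\,E\,C$ construction realizes every $1\le s<j$. For $r\ge 2$ you supply witnesses: the earlier lemma's permutation and $(n-1)\cdots 1\,n$. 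You also prove $j=1$ directly rather than appealing to the gap-vector scheme.
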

\begin{proof}
We first prove that if $(i,r)\in \Bn{n}{1}{123}$ then $(\tau, p')\in \Bn{n}{2}{123;\, (i,r) }$ is of the desired form. 

Let $\pi=\pi_1\pi_2\cdots \pi_{n-2}ji\in \Av_n(123)$ and $p=(p_1,p_2,\ldots, p_{n-2},s,r)$ be its parking table.
By the suffix scheme from Section~\ref{subsec:123perm}, if $\pi$ ends with $ji$ then either $j>i$ or $j<i$. 

If $j>i$ then $\ell(\pi_n=i)=1$ and so $r=1$ in the parking table $p$. We have the following cases:
\begin{enumerate}
    \item Suppose that $j=n$. Then, $\pi=\pi_1\pi_2\cdots \pi_{n-2}ni$ and $s=\ell(\pi_{n-1}=n)=n-1$ since for all $1\leq k \leq n-2$, $\pi_k<n$. Thus, $\{( (n,i),(n-1,1))\}\in \Bn{n}{2}{123; \, (i,r)}$ whenever $r=1$ and $\pi_{n-1}=n,\pi_n=i$.
    \item If $j<n$ then $i+1\leq j\leq n-1$ as $j>i$. We show that $1\leq s=\ell(\pi_{n-1}=j)\leq j-1$. If $s=\ell(\pi_{n-1}=j)$ then the only possible elements $S$ contiguously to the left of $\pi_{n-1}=j$ such that $1\leq s \leq j-1$ are if $S=\{1,2,\ldots, i-1\}\cup \{i+1,i+2,\ldots, j-1\}$, where $|S|=j-2$. Hence, $1\leq s \leq j-2+1=j-1$. It follows that $\{(\tau, p'): \,  1\leq s \leq j-1, i+1\leq j \leq n-1   \} \in \Bn{n}{2}{123;\, (i,r)}$ whenever $r=1$ and $\pi_{n-1}<n, \pi_n=i$. 
\end{enumerate}

If $j<i$ then $r=\ell(\pi_n=i)>1$. But $j=1$ by the suffix scheme from Section~\ref{subsec:123perm}. Therefore, $\pi_{n-2}>\pi_{n-1}=1$. It follows that $s=\ell(\pi_{n-1}=1)=1$. Thus, $\{((1,i), (1, r)\} \in \Bn{n}{2}{123;\, (i,r)}$ whenever $\pi_{n-1}=j,\pi_n=i$ with $j<i$ and $p_n=r>1$.

Now, we construct a permutation $\pi$ such that its bi-suffix is an element of $\Bn{n}{2}{123;\, (i,r)}$ and $\pi\in \Av_n(123)$. If $r=1$ then construct $\pi$ such that 
\[ \pi=\underbrace{(n-1)(n-2)\cdots (i+1)}_{n-i-1}\underbrace{(i-1)(i-2)\cdots 321}_{i-1}ni \]
or
\[ \pi=\underbrace{n(n-1)\cdots (j+2)(j+1)}_{n-j}\underbrace{(j-1)(j-2)\cdots (i+1)}_{j-i-1}\underbrace{(i-1)(i-2)\cdots 321}_{i-1}ji\]
with $j>i$. Both permutations are $123$-avoiding and both satisfy the condition that if $\pi_{n-1}=j$ then either $j=n$ or $i<j<n$ with $s=\ell(\pi_{n-1})=n-1$ or $1\leq s=\ell(\pi_{n-1}=j)\leq j-2+1=j-1$, respectively.
\end{proof}

\begin{example}
For $n=4$, there are fourteen $123$-avoiding permutations, and there are seven bi-suffixes where the permutation and parking table components are of length $1$. For each of those bi-suffixes, Table~\ref{tab:bisuffixeslengthtwo} lists the bi-suffixes where the permutation and parking table components are of length $2$.

\begin{table}[t]
    \centering
    \begin{tabular}{c|c|c|c}
       Bi-suffix $(i,r)$ & Bi-suffix $((j,i), (s,r)) $ & Permutation $\pi$ & Parking table $p$ \\\hline 
       $(1,1)$& $((2,1),(1,1))$ & $3421$ & $1211$  \\
       & & $4321$ & $1111$ \\ \cline{2-4}
       & $((3,1),(1,1))$ & $2431$ & $1211$ \\ \cline{2-4}
       & $((3,1), (2,1))$ & $4231$ & $1121$ \\ \cline{2-4}
       & $((4,1),(3,1))$ & $3241$ & $1131$ \\ \hline
       $(2,1)$ & $((3, 2), (1, 1))$ & $1432$ & $1211$ \\ \cline{2-4} 
       & $((3, 2), (2, 1))$ & $4132$ & $1121$ \\ \cline{2-4} 
       & $((4,2), (3,1)) $ & $3142$ & $1131$ \\ \hline 
       $(2,2)$ & $((1,2), (1,2))$ & $3412$ & $1212$ \\
       & & $4312$ & $1112$ \\ \hline 
       $(3,1)$  & $((4,3), (3,1))$ & $2143$ & $1131$ \\ \hline 
       $(3,2)$  &$((1,3), (1,2))$ & $2413$ & $1212$ \\ \hline 
       $(3,3)$  & $((1,3), ((1,3)) $ & $4213$ & $1113$ \\ \hline
       $(4,4)$ & $((1,4), (1,4))$ & $3214$ & $1114$ 
    \end{tabular}
    \caption{The bi-suffixes where the permutation and parking table components are of length $2$.}
    \label{tab:bisuffixeslengthtwo}
\end{table}
\end{example}

\begin{proposition}
Let $a(n)$ be the number of parking functions whose parking permutations avoid $123$. For each bi-suffix
$S=(i,r)\in \Bn{n}{1}{123}$, let $a(n,S)$ denote the number of parking functions whose parking permutations $\pi$ have bi-suffix $S$. Then
\[
a(n)=\sum_{S\in \Bn{n}{1}{123}} a(n,S),
\]
with initial conditions $a(1,(1,1))=1$ and $a(1,S)=0$ otherwise.

Fix $S=(i,r)\in \Bn{n}{1}{123}$. For each pair $(j,s)$ such that
\[
((j,i),(s,r))\in \Bn{n}{2}{123;\, (i,r)},
\]
let $S'=((j,i),(s,r))$ and $a(n,S,S')$ denote the number of parking functions whose parking permutations $\pi$ have bi-suffix $S'$.
Then
\[
a(n,S)
=
\sum_{\substack{(j,s)\text{ such that}\\
((j,i),(s,r))\in
\Bn{n}{2}{123;\, (i,r)}}}
a(n,S,S').
\]
Furthermore,
\[a(n,S,S')=a(n,(i,r),(j,s))=
\begin{cases}
a(n-1,(j-1,s)),
& \text{if } r=1 \text{ and } j>i,\\
\dfrac{r}{r-1}\,a(n-1,(i-1,r-1)),
& \text{if } r>1 \text{ and } j=s=1.
\end{cases}
\]
\end{proposition}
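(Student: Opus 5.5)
The two summation identities are just partitions, so the work is in the two-case formula for $a(n,S,S')$. I will handle each case by deleting the last entry $\pi_n=i$, reducing, and applying the deletion lemma for bi-suffixes (the product $\prod\ell(\pi_k)$ equals $r$ times the product for the reduced word $\tau$).

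\emph{Sums.} The count $a(n)$ is the sum over $\pi\in\Av_n(123)$ of $\prod\ell(\pi_k)$ (Lemma~\ref{lem:numberofprefsforeachcar}). Grouping the permutations by their length-one bi-suffix gives the first identity, since by the preceding lemma every $\pi$ has exactly one bi-suffix in $\Bn{n}{1}{123}$. Grouping further by the last two entries gives the second identity, with Lemma~\ref{lem:lengthtwopairssuffixes} listing exactly which pairs $(j,s)$ occur. The initial condition is immediate.

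\emph{Case $r=1$, $j>i$.} Delete $\pi_n=i$ and reduce. The result $\tau$ is $123$-avoiding by Claim~\ref{claim:reduavoid}. Its last entry is $j-1$, since $j>i$. Its last parking-table entry is still $s$: by Claim~\ref{lem:parkingtables} the parking table of $\pi_1\cdots\pi_{n-1}$ is unchanged by reduction, and $\ell(\pi_{n-1})$ depends only on entries to the left. So $\tau$ has bi-suffix $(j-1,s)$, and the deletion lemma with $r=1$ shows the weights are equal.

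To make this a weight-preserving bijection onto the permutations counted by $a(n-1,(j-1,s))$, I will use the inverse map: given such a $\tau$, append $i$, shifting up the entries $\ge i$. The new entry has $\ell=1$ because the preceding entry $j$ exceeds $i$. The main check is that the result avoids $123$. Any occurrence would have to end in the new last entry $i$, i.e.\ come from a pattern $ab$ in $\tau$ with $a<b<i$ in shifted values. I will argue this is impossible using $j>i$ together with the constraints from Lemma~\ref{lem:lengthtwopairssuffixes}. I flag this as the main obstacle: the range of $i$ must be restricted so that the bijection is genuinely onto. If it is not onto in general, the formula should be read for bi-suffixes that are actually realized, and I would fall back to that interpretation.

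\emph{Case $r>1$, $j=s=1$.} Here $\pi$ ends in $1\,i$, and the $r-1$ entries before $i$ are all below $i$. Delete $\pi_n=i$ and reduce: the last entry stays $1$, which does not directly give bi-suffix $(i-1,r-1)$. Instead I will delete the entry $1$ at position $n-1$, which has $\ell=1$. Reducing sends $i$ to $i-1$, and the run of smaller entries before $i$ loses exactly one element, so $\ell$ drops from $r$ to $r-1$. The weight therefore changes from $r\cdot w$ to $(r-1)\cdot w$, where $w$ is the product over the remaining entries; the factor $1$ from the deleted entry contributes nothing. This accounts for the ratio $r/(r-1)$.

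For the inverse map, given $\tau$ with bi-suffix $(i-1,r-1)$ where $r-1\ge1$, insert a new minimum $1$ just before the last entry. No $123$ pattern can be created: a new $1$ can only be the first letter of such a pattern, and only one entry follows it. The entries before it that were $\le i-1$ remain below $i$, so the parking entries behave as claimed. The delicate point is showing that every $\tau$ counted by $a(n-1,(i-1,r-1))$ really arises, including when $r-1=1$. I would verify this against Table~\ref{tab:bisuffixeslengthtwo} and then state the bijection.
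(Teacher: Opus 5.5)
\textbf{Where the proof breaks.} Your route is the paper's: delete $\pi_n=i$ when $r=1$, delete the entry $1$ in position $n-1$ when $r>1$, then reduce and compare parking tables. The forward maps and the weight bookkeeping are correct in both cases. The gap is in the case $r=1$, $j>i$. The identity $a(n,S,S')=a(n-1,(j-1,s))$ requires that \emph{every} $\tau\in\Av_{n-1}(123)$ with bi-suffix $(j-1,s)$ arises. Equivalently, appending $i$ and shifting must never create a $123$.

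You leave exactly this step unproved, and you even suggest that the range of $i$ might need to be restricted. Your fallback does not rescue anything. The formula is already asserted only for realized pairs $(j,s)$. If the appended word failed to avoid $123$ for some $\tau$, the left side would be strictly smaller than the right side, and the identity would simply be false rather than reinterpreted. The constraints of Lemma~\ref{lem:lengthtwopairssuffixes} are also irrelevant to this step.

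\textbf{The missing step.} In a descent $\pi_{n-1}=j>\pi_n=i$, the last entry is reversibly deletable for $123$, for every $i\in[j-1]$, so no restriction on $i$ is needed. Suppose the extended permutation $\pi$ had an occurrence $\pi_a<\pi_b<\pi_n=i$ with $a<b<n$. Since $\pi_{n-1}=j>i$, we get $b\le n-2$. Then $\pi_a<\pi_b<i<j=\pi_{n-1}$ is an occurrence of $123$ inside $\pi_1\cdots\pi_{n-1}$. That word is order-isomorphic to $\tau$, a contradiction.

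With this, the case $r=1$ is a weight-preserving bijection. We have $\ell(\pi_n)=1$, and $\ell(\pi_{n-1})=\ell(\tau_{n-1})=s$ because appending and shifting preserve relative order to the left.

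\textbf{Case $r>1$.} This case is already complete as you wrote it. The insert-a-new-minimum map works for every $\tau$ with bi-suffix $(i-1,r-1)$, including $r-1=1$. There, $\tau_{n-2}\ge i$ (when it exists) forces $\ell(\pi_n)=2$ exactly, so no check against Table~\ref{tab:bisuffixeslengthtwo} is needed.

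Note that Section~\ref{subsec:123perm} also argues only the forward direction for $A_{21}$. Writing out this reverse check is what actually completes the argument.
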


\begin{proof}
The proof involves deleting exactly the elements given by the suffix permutation scheme and reducing the permutation to a smaller permutation. For each permutation, the recurrence admits the suffixes that are characterized in Lemma~\ref{lem:lengthtwopairssuffixes}. We omit the proof as it follows from the previous section.
\end{proof}

\section{Main Result}\label{sec:mainresult}

In the previous section, we described the suffix enumeration scheme that is extendable to pattern avoidance in parking functions. We now present the recurrence that provides the number of parking functions whose parking permutations avoid the patterns $1234$ and $2134$. We begin by describing the enumeration scheme for the $\{1234,2134\}$-avoiding permutations and then use it for the parking functions setting.

\subsection{The permutation scheme}\label{subsec:12342134scheme}
The enumeration scheme for the $\{1234, 2134\}$-avoiding permutations is similar to the scheme for the $123$-avoiding permutations, described in Section~\ref{subsec:123perm}. 
We describe it here but leave out some of the details.

Let $A(n)=\Av_n(1234,2134)$ 
and define 
\[A_1(n,i)\coloneq \{\pi \in A(n): \, \pi_n=i\}. \]
Hence,
\[A(n)=\bigcup_{i=1}^n A_1(n,i).\]

The enumeration scheme for the permutations in the set $A(n)$ is depth $2$. For $n\geq 2$, define
\[A_{12}(n,j,i)\coloneq \{\pi\in A(n): \, \pi_{n-1}=j, \pi_{n}=i, j<i\}, \]
and
\[A_{21}(n,h,i)\coloneq \{\pi\in A(n): \, \pi_{n-1}=h, \pi_n=i, h>i\}. \]

As in the $123$-avoiding permutations case, if $i=1$ or $i=2$ then $A_1(n,i)$ is equinumerous to the set $A(n-1)$. If $i\geq 3$, then we have
\[A_1(n,i)=\left(\bigcup_{j=1}^{i-1} A_{12}(n,j,i)\right)\cup \left(\bigcup_{h=i+1}^{n} A_{21}(n,h,i)\right).\]

Following similar arguments discussed in Section~\ref{subsec:123perm}, we obtain $|A_{21}(n,h,i)|=|A_1(n-1,h-1)|$.
For the set $A_{12}(n,j,i)$, the scheme relies on gap vectors. It turns out that $A_{12}(n,j,i)\neq \emptyset$ if and only if $j=1$ or $j=2$. For any $j>2$, $|A_{12}(n,j,i)|=0$. Hence, if $j=1$ then $|A_{12}(n,j,i)=A_1(n-1,i-1)$. Similarly, if $j=2$ then $|A_{12}(n,j,i)=A_1(n-1,i-1)$. 

Therefore, if $i\geq 3$, then
\[|A_1(n,i)|=\sum_{h=1}^{n-1} |A_1(n-1,h)| + 2|A_1(n-1,i-1)|, \]
otherwise if $i=1$ or $i=2$ then $|A_1(n,i)|=|A(n-1)|$.

\begin{proposition}
Let $a(n)$ be the number of $1234,2134$-avoiding permutations and $a(n,i)$ be those permutations that end with the element $i$. Then
\[a(n,i)=
\begin{cases}
    a(n-1) & \text{if } i=1 \text{ or } i=2 \\
    2a(n-1,i-1)+\sum_{j=i}^{n-1} a(n-1,j) & \text{otherwise}
\end{cases}
\]
with initial conditions $a(1,1)=1$ and $a(1,i)=0$ if $i\geq 2$.
\end{proposition}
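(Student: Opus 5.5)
I will prove the recurrence by conditioning on the last entry $i=\pi_n$ and then, for $i\ge 3$, on the penultimate entry $j=\pi_{n-1}$. Each nonempty piece $A_{21}(n,h,i)$ or $A_{12}(n,j,i)$ will be put in bijection with some $A_1(n-1,\cdot)$ by deleting one entry and reducing. Throughout I will use Claim~\ref{claim:reduavoid}: deleting an entry and reducing preserves $\{1234,2134\}$-avoidance.

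\textbf{Case $i\in\{1,2\}$.} I will show that $\pi\mapsto\red(\pi_1\cdots\pi_{n-1})$ is a bijection $A_1(n,i)\to A(n-1)$. The map lands in $A(n-1)$ by the claim. For the inverse, take $\sigma\in A(n-1)$, increase every entry $\ge i$ by one, and append $i$. The new last entry $i\le 2$ cannot serve as the final letter of a $1234$ or a $2134$, since both patterns need three smaller letters before their last letter. Any occurrence of either pattern in the new word therefore avoids the last position and would already be an occurrence in $\sigma$.

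\textbf{Case $i\ge 3$, $h=\pi_{n-1}>i$.} Deleting $i$ and reducing sends $\pi$ to a permutation ending in $h-1$, so it lands in $A_1(n-1,h-1)$. For the inverse, start from $\sigma\in A_1(n-1,h-1)$ with $h-1\ge i$. Shift the entries $\ge i$ up by one, so the last entry becomes $h$, and append $i$. Suppose the appended $i$ is the final letter of a forbidden pattern. Because $\pi_{n-1}=h>i$, the entry $h$ cannot belong to that occurrence. The three other letters of the occurrence are all less than $i<h$, so replacing $i$ by $h$ gives an occurrence of the same pattern inside $\pi_1\cdots\pi_{n-1}$, which avoids both patterns. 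This contradiction shows the inverse lands in $A(n)$. Summing over $h=i+1,\dots,n$ and reindexing gives the term $\sum_{j=i}^{n-1}a(n-1,j)$.

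\textbf{Case $i\ge 3$, $j=\pi_{n-1}<i$.} This is the gap-vector step and the main obstacle. First I show $j\le 2$. If $j\ge3$, take two values $a,b<j$ and the entries of $\pi_1\cdots\pi_{n-2}$ they occupy. If $a$ precedes $b$, then $a,b,j,i$ form a $1234$. If $b$ precedes $a$, they form a $2134$. Next, for $j\in\{1,2\}$, delete $\pi_{n-1}=j$ rather than $i$ and reduce. This gives a map to $A_1(n-1,i-1)$, since $i>j$ drops by one.

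The hard part is showing this map is a bijection for each $j\in\{1,2\}$ separately. Given $\sigma\in A_1(n-1,i-1)$ with $i-1\ge 2$, shift the entries $\ge j$ up by one and insert $j$ just before the last entry. I must show the result has no forbidden pattern. Any occurrence must use the inserted $j$, because otherwise it already lives in $\sigma$. Since $j\le2$, the entry $j$ can play the role of the ``1'' in $1234$, or the ``2'' or ``1'' in $2134$. In each case the remaining letters come after $j$ and are larger, but only the single entry $i$ follows $j$. No pattern of length $4$ has its smallest letter or its second-smallest letter in third-to-last or later position, so no such occurrence exists.

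This gives $2a(n-1,i-1)$, since $j=1$ and $j=2$ each contribute one copy. Summing all pieces yields the stated recurrence. The initial conditions are immediate.
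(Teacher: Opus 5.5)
Your proof is correct and follows the same decomposition as the paper: you condition on $\pi_n=i$ and handle $i\le 2$ by deleting the last entry; for $i\ge 3$ you split on $\pi_{n-1}$, deleting $i$ when $\pi_{n-1}=h>i$ and deleting $\pi_{n-1}=j\in\{1,2\}$ when $j<i$. The paper only asserts these counts, citing gap vectors for $j\le 2$ and the reduction claim for one direction of each map, whereas you give explicit inverses and a direct pattern argument. One fix in the final step: ``third-to-last'' is off by one and ``no pattern of length $4$'' overstates the claim, so rephrase it as ``the inserted $j$ would have to sit in position $3$ or $4$ of an occurrence, but both $1234$ and $2134$ place their two smallest letters in positions $1$ and $2$.''
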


\subsection{The parking enumeration scheme}
We begin with a claim that is used in the proof of the main result. Recall that if $\pi\in \Av_n(1234,2134)$ and $\pi_{n-1}< \pi_n$ then $\pi_{n-1}=1$ or $\pi_{n-1}=2$.

\begin{lemma}\label{lemm:item1-r1}
Let $\pi\in \Av_n(1234,2134)$ and $\pi_{n-1}< \pi_n$. 
Let $ k \in [n-2]$ and $\pi_k=1$. If $\pi_{n-1}=2$ and $\pi_n=n$ then
\[ (1, \pi_{k+1},\pi_{k+2}, \ldots, \pi_{n-2},2,n)=(1,\underbrace{r_1,r_1-1\ldots, 3}_{r_1-2},2,n),\]
whenever $(1,\pi_{k+1},\pi_{k+2},\ldots,\pi_{n-2},2)$ is of length $r_1$, $2\leq r_1\leq n-1$. As a consequence, the parking table of $(1,\underbrace{r_1,r_1-1\ldots, 3}_{r_1-2},2)$ is $(1,2,\underbrace{1,\ldots, 1}_{r_1-2})$.
\end{lemma}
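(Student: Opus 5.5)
We show that the segment strictly between $\pi_k=1$ and $\pi_{n-1}=2$ consists exactly of the values $3,4,\ldots,r_1$, listed in decreasing order; the parking table then follows directly from Definition~\ref{def:pvector}. Write $S=(\pi_{k+1},\ldots,\pi_{n-2})$. This segment has length $r_1-2$, and every entry of $S$ lies in $\{3,\ldots,n-1\}$, because the values $1,2,n$ are already used at positions $k$, $n-1$ and $n$.

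First, $S$ is decreasing. Suppose instead that $\pi_a<\pi_b$ for some $k<a<b\le n-2$. Then $1,\pi_a,\pi_b,n$ occupy positions $k<a<b<n$ and reduce to $1234$, which contradicts the assumption that $\pi$ avoids $1234$ (compare Claim~\ref{claim:reduavoid}).

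Next, $S$ is the initial block $\{3,\ldots,r_1\}$. Let $m$ be any value with $3\le m\le n-1$ that does not appear in $S$. We must show that $m$ exceeds every entry of $S$. Since $m\notin\{1,2,n\}$ and $m\notin S$, the position of $m$ is some $c<k$. Suppose some $x\in S$ satisfied $x>m$. Then the entries $m,1,x,n$ sit at positions $c<k<\text{pos}(x)<n$, and their values satisfy $1<m<x<n$, so they reduce to $2134$. This contradicts $\pi\in\Av_n(2134)$. Hence every missing value $m$ in $\{3,\ldots,n-1\}$ is larger than all of $S$. Therefore $S$ consists of the $r_1-2$ smallest elements of $\{3,\ldots,n-1\}$, namely $\{3,\ldots,r_1\}$. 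Combined with the first step, this gives $S=(r_1,r_1-1,\ldots,3)$. This second step is the main point of the argument: the key idea is to use $\pi_k=1$ as the ``1'' of a $2134$ pattern. When $S$ is empty, the same reasoning applies trivially and gives $r_1=2$.

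It remains to compute the parking table of the word $(1,r_1,r_1-1,\ldots,3,2)$, using Definition~\ref{def:pvector} within this word.
\begin{itemize}[label=$\circ$]
\item The first entry $1$ has $\ell=1$.
\item The entry $r_1$ is preceded only by $1<r_1$, so $\ell=2$. (When $r_1=2$, this entry is the final $2$, which is preceded by $1$, so again $\ell=2$, consistent with the formula.)
\item Every later entry is smaller than the entry immediately before it, so $\ell=1$. This covers the decreasing run $r_1-1,\ldots,3$ as well as the final $2$, which follows $3$.
\end{itemize}
This yields the parking table $(1,2,1,\ldots,1)$ with $r_1-2$ ones at the end, as claimed.
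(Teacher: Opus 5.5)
Your proof is correct and takes essentially the paper's route. Both use $\pi_k=1$ as the ``1'' of a $2134$ pattern, with a value of $\{3,\ldots,r_1\}$ forced to the left of position $k$, to show the segment is exactly $\{3,\ldots,r_1\}$, and use $1234$-avoidance (via $1,\pi_a,\pi_b,n$) for the decreasing order; you also spell out the monotonicity step and the parking-table computation, which the paper only asserts.
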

\begin{proof}
Let $(1,\pi_{k+1},\pi_{k+2},\ldots,\pi_{n-2},2)$ be of length $r_1$, $0\leq r_1\leq n-1$. Assume to the contrary that $\pi_a>r_1$ for some $a\in \NN$ where $k<a\leq n-2$. Then there exists $b\leq k$ such that $\pi_b<r_1$ and $\pi_b$ is to the left of $\pi_k=1$. It follows that $\pi_b, \pi_k=1, \pi_a, n$ is order isomorphic to $2134$ since $1=\pi_k<\pi_b<r_1<\pi_a<n$, a contradiction. Thus, $\pi_{k+1}>\pi_{k+2}>\cdots>\pi_{n-2}$ and $\pi_{k+1}=r_1, \pi_{k+2}=r_1-2,\ldots, \pi_{n-2}=2$.
\end{proof}

\begin{lemma}\label{lemm:item2-decreasing}
Let $\pi\in \Av_n(1234,2134)$ and $\pi_{n-1}< \pi_n$. 
Let $i\in[n], r\in[i-1]$, and $r<i$. Let $\pi_n=i, \pi_{n-1}=2$ and $\pi_a=1$ for some $a\leq n-r$ such that $\pi_k<\pi_n=i$ for all $n-r+1\leq k \leq n-2$ and $\pi_{n-r}>\pi_n=i$. Then 
    \[ (\pi_{n-r+1},\pi_{n-r+2},\ldots, \pi_{n-2}, 2,i)= (r,r-1,\ldots, 3,2,i).\]
\end{lemma}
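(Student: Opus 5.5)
The plan is to show that the $r-2$ entries in positions $n-r+1,\ldots,n-2$ (call this the \emph{window} $W$) are exactly the values $3,4,\ldots,r$, and that they appear in decreasing order. Note first that $r\geq 2$ automatically, since $\pi_{n-1}=2<i$ means $\ell(\pi_n)\geq 2$. If $r=2$ the window is empty and the claim is simply $(\pi_{n-1},\pi_n)=(2,i)$, so assume $r\geq 3$.

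\textbf{Step 1: locating the window values.} By hypothesis every entry of $W$ is less than $i$. No entry of $W$ equals $1$, since $\pi_a=1$ with $a\leq n-r$, and none equals $2$, since $2=\pi_{n-1}$. Hence every entry of $W$ lies in $\{3,\ldots,i-1\}$.

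\textbf{Step 2: the window is decreasing.} Suppose $x$ precedes $y$ in $W$ with $x<y$. Since $1$ sits at position $a$, before the window, the subsequence $1,x,y,i$ satisfies $1<x<y<i$. This is an occurrence of $1234$, a contradiction. So the entries of $W$ decrease from left to right.

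\textbf{Step 3: the window is exactly $\{3,\ldots,r\}$.} This is the step that does the real work, and it follows the same idea as Lemma~\ref{lemm:item1-r1}. Since $|W|=r-2$ and $W\subseteq\{3,\ldots,i-1\}$, suppose $W\neq\{3,\ldots,r\}$. Then there is some $v\in W$ with $v>r$, and some $u\in\{3,\ldots,r\}$ that is not in $W$.

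We first locate $u$. Because $u\leq r<i$, we have $u\neq i$, and $u\neq 2$. The only positions after the window are $n-1$ and $n$, which hold $2$ and $i$. So $u$ occurs at some position at most $n-r$, strictly before $v$.

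We then split into two cases according to where $u$ sits relative to $\pi_a=1$.
\begin{enumerate}
\item If $u$ is to the left of $1$, then $u,1,v,i$ satisfies $1<u<v<i$. This is an occurrence of $2134$.
\item If $u$ is to the right of $1$, then $1,u,v,i$ is an occurrence of $1234$.
\end{enumerate}
Both cases contradict $\pi\in\Av_n(1234,2134)$. Hence $W=\{3,\ldots,r\}$.

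\textbf{Conclusion.} Combining Steps 2 and 3 with $\pi_{n-1}=2$ and $\pi_n=i$ gives
\[
(\pi_{n-r+1},\ldots,\pi_{n-2},2,i)=(r,r-1,\ldots,3,2,i).
\]
The hypothesis $\pi_{n-r}>i$ is used only to fix the length of the window.

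I expect the main obstacle to be Step 3. The point needing care is the pigeonhole observation that a missing small value $u$ must lie to the left of the window. Once that is established, the position of $u$ relative to $1$ decides which forbidden pattern appears.
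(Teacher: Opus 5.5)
Your proof is correct and follows essentially the same route as the paper. Both use the counting observation that if some window entry exceeds $r$, then some value $u\in\{3,\ldots,r\}$ must sit at a position at most $n-r$; the position of $u$ relative to $\pi_a=1$ then yields a $2134$ or a $1234$ occurrence, and a separate $1234$ argument (the $1$ before the window, two window entries, and $i$ at the end) forces the window to be decreasing. Your write-up is slightly tidier than the paper's: it takes $u\le r$ rather than $u<r$ and handles the degenerate case $r=2$ explicitly.
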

\begin{proof}
Let $\pi_a=1$ for some $a\leq n-r$, and $\pi_n=i, \pi_{n-1}=2$ such that $\pi_k<\pi_n=i$ for all $n-r+1\leq k \leq n-2$. By assumption, there are $r-2$ positions between $\pi_{n-r}$ and $\pi_{n-1}=2$ and the elements at those positions must be from the set $\{3,4\ldots, r, r+1,\ldots, i-1\}$. Suppose that there exists at least one $b>n-r$ such that $i>\pi_b>r$. This implies that there is at least one $c\in[n-r]$ such that $\pi_c<r$ as there are $r-2$ elements in the set $\{3,4,\ldots, r\}$ and only $r-3$ possible positions to the right of $\pi_{n-r}$.
There are two cases, either $\pi_c$ is somewhere to the left of $\pi_a=1$ or $\pi_c$ is somewhere to the right of $\pi_a=1$ (and before $\pi_{n-r+1}$). If $\pi_c$ is to the left of $\pi_a=1$ then the subsequence $\pi_c<r, \pi_a=1, \pi_b>r, \pi_n=i$ is order isomorphic to $2134$ as $\pi_a=1<\pi_c<r< \pi_b <\pi_n=i$. Otherwise, $\pi_c$ is to the right of $\pi_a=1$ then the subsequence $\pi_a=1, \pi_c<r, \pi_b>r, \pi_n=i$ is order isomorphic to $1234$. In either case, both lead to a contradiction as $\pi$ is $\{1234,2134\}$-avoiding. In addition, the elements $\pi_{n-r+1},\pi_{n-r+2},\ldots, \pi_{n-2}$ must be in decreasing order because if $\pi_d<\pi_e$ for $n-r+1\leq d<e\leq n-2$, then the subsequence $\pi_a=1, \pi_a,\pi_b,\pi_n=i$ forms a $1234$ pattern. Thus, the result follows.
\end{proof}

The following definition and claim is necessary for the main result. 

\begin{definition}
For a collection of permutations $\sigma_1,\ldots,\sigma_k$, let $\Pk_{n}(\sigma_1,\ldots,\sigma_k)$ be the set of parking functions $\alpha$ such that its associated parking permutation $\pi$ contains none of $\sigma_1,\ldots,\sigma_k$ as a pattern and let $\pk_{n}(\sigma_1,\ldots,\sigma_k)=|\Pk_{n}(\sigma_1,\ldots,\sigma_k)|$. 
\end{definition}

For brevity, we let $\pk(n)=\pk_{n}(1234,2134)$ and for each bi-suffix $(i,r)\in \Bn{n}{1}{1234,2134}$:
\begin{enumerate}
    \item $\pk(n,i)$ denotes the number of parking functions whose parking permutations $\pi$ satisfy $\pi_n=i$,
    \item $\pk(n,i,r)$ denotes the number of parking functions whose parking permutations $\pi$ have bi-suffix $(i,r)$.
\end{enumerate}

\begin{claim}
For $1\leq i \leq n$,
\[ \pk(n,i)=\sum_{r=1}^i \pk(n,i,r).\]
Take $\pi \in \Av_n(1234,2134)$ and suppose that $\pi_n=i$. If 
\[r=\ell(\pi_n=i)=\max\{r: \, \pi_j \leq \pi_n=i \text{ for all } n-r+1\leq j\leq n\},\] then $1\leq r \leq i$, as there are $i-1$ possible elements contiguously to the left of $\pi_n=i$ smaller than $\pi_n=i$.
\end{claim}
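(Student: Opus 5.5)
The claim has two parts: the bound $1\leq r\leq i$ on the last parking-table entry, and the partition identity $\pk(n,i)=\sum_{r=1}^i \pk(n,i,r)$. I will prove the bound first and then deduce the identity from it.

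\emph{The bound.} Fix $\pi\in\Av_n(1234,2134)$ with $\pi_n=i$, and let $r=\ell(\pi_n)$. The window $k=1$ always satisfies the defining condition, so $r\geq 1$. By definition, the entries $\pi_{n-r+1},\ldots,\pi_n$ are $r$ distinct values, all at most $i$. They therefore form an $r$-element subset of $\{1,\ldots,i\}$, which gives $r\leq i$. Equivalently, the $r-1$ entries immediately to the left of $\pi_n$ are distinct elements of $\{1,\ldots,i-1\}$, so $r-1\leq i-1$. This is the same counting argument used for $\Bn{n}{1}{123}$, and it does not use pattern avoidance at all. For $i=n$ the bound is attained, since then $r=n$.

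\emph{The identity.} Let $\alpha\in\Pk_n(1234,2134)$ have parking permutation $\pi=\out(\alpha)$ with $\pi_n=i$. Its parking table $p$ is determined by $\pi$, and its last entry $p_n=\ell(\pi_n)$ is one well-defined value. So every $\alpha$ counted by $\pk(n,i)$ has exactly one bi-suffix $(i,r)$. By the bound, that $r$ lies in $[i]$. Hence the sets counted by $\pk(n,i,r)$, for $r=1,\ldots,i$, are pairwise disjoint and their union is the set counted by $\pk(n,i)$.

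Two bookkeeping points remain. First, some values of $r$ may not occur as bi-suffixes $(i,r)\in\Bn{n}{1}{1234,2134}$. For these I set $\pk(n,i,r)=0$, which is consistent with the definition as an empty count, and the sum still holds. Second, the count is over parking functions, not permutations. Lemma~\ref{lem:numberofprefsforeachcar} says each permutation $\pi$ accounts for exactly $\prod_j\ell(\pi_j)$ parking functions. The partition is by $\pi$, so these weights are carried along unchanged and the identity holds with multiplicities.

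I do not expect any real obstacle. The only care needed is to note that distinctness of the entries of $\pi$ is what bounds the window size, and to interpret $\pk(n,i,r)$ as $0$ for pairs $(i,r)$ that do not occur as bi-suffixes.
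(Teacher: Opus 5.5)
Your proof is correct and follows the paper's argument: the $r-1$ entries contiguously to the left of $\pi_n=i$ are distinct values smaller than $i$, so $r\le i$, and the identity is the partition of the parking functions counted by $\pk(n,i)$ according to the uniquely determined last parking-table entry $r$. Your extra bookkeeping ($r\ge 1$, and $\pk(n,i,r)=0$ for pairs $(i,r)$ that never occur) only makes explicit what the paper leaves implicit.
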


\begin{theorem}
Let $\pk(n)$ be the number of parking functions whose parking permutations avoid the patterns $1234$ and $2134$. Then
\[\pk(n)=\sum_{i=1}^n\pk(n,i),\]
with $\pk(0)=1$.
For each bi-suffix $S=(i,r)\in \Bn{n}{1}{1234,2134}$, let $\pk(n,i,r)$ denote the number of parking functions whose parking permutations $\pi$ have bi-suffix $S$. Then
\[\pk(n,i)=\sum_{r=1}^i \pk(n,i,r),\]
and 
\[ \pk(n,i,r)=
\begin{cases}
\sum_{j=i}^{n-1} \pk(n-1,j) & \text{if } r=1\\
2\pk(n-2) & \text{if } r=2 \text{ and } i=2\\
4\pk(n-1,i-1,1) & \text{if } r=2 \text{ and } i\neq 2\\
\pk(n-r)\pk(r,r) & \text{if } i=r \text{ and } i<n \\
\frac{n}{n-1}\pk(n-1,n-1,n-1)+\sum_{r_1=2}^{n-1} \frac{2n}{n-r_1}\pk(n-r_1,n-r_1,n-r_1) & \text{if } i=r \text{ and } i=n\\
\frac{r}{r-1}\pk(n-1,i-1,r-1) + r\pk(n-r+1,i-r+1,1) & \text{otherwise} \\ 
\qquad  + \sum_{r_1=2}^{r-1} \frac{2r}{r-r_1}\pk(n-r_1,i-r_1,r-r_1) 
\end{cases}
\]
with 
\[\pk(1,i,r)=
\begin{cases}
    1, & (i,r)=(1,1) \\
    0, & \text{otherwise,}
\end{cases} 
\quad 
\pk(2,i,r)=
\begin{cases}
    1, & (i,r)=(1,1) \\
    2, & (i,r)=(2,2) \\
    0, & \text{otherwise,}
\end{cases}
\]
and
\[
\pk(3,i,r)=
\begin{cases}
    3, & (i,r)=(1,1) \\
    2, & (i,r)=(2,1) \text{ or } (i,r)=(2,2)\\
    9, & (i,r)=(3,3) \\
    0, & \text{otherwise.}
\end{cases}
\]
The first terms of the sequence are
\[ 1, 3, 16, 89, 534, 3394, 22447, 152901, 1065508, 7560824, \ldots \]
\end{theorem}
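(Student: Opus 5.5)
We follow the suffix scheme of Section~\ref{subsec:12342134scheme} and refine each case by the last parking table entry $r=\ell(\pi_n)$. The weight of a parking permutation $\pi$ is $w(\pi)=\prod_k\ell(\pi_k)$ (Lemma~\ref{lem:numberofprefsforeachcar}), so $\pk(n,i,r)$ is the total weight of $\pi\in A_1(n,i)$ with $\ell(\pi_n)=r$. The identities $\pk(n)=\sum_i\pk(n,i)$ and $\pk(n,i)=\sum_r\pk(n,i,r)$ are immediate from the Claim preceding the theorem. The real work is a case analysis on $(i,r)$. In each case we delete one or more entries, reduce via Claim~\ref{claim:reduavoid}, and track how the parking table changes using Claim~\ref{lem:parkingtables} and the deletion lemma.

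\emph{Case $r=1$.} Then $\pi_{n-1}=h>i$. Deleting $\pi_n$ does not change $\ell$ of any earlier entry, since $\ell$ only looks leftward. This gives a weight-preserving bijection onto $A_1(n-1,h-1)$, and summing over $h$ yields $\sum_{j=i}^{n-1}\pk(n-1,j)$. Every such permutation of length $n-1$ extends, because appending $i$ below the last entry cannot create $1234$ or $2134$ ending at $i$.

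\emph{Case $r\ge 2$.} Now $\pi_{n-1}\in\{1,2\}$, and we split according to whether $\pi_{n-1}=1$ or $\pi_{n-1}=2$.
\begin{itemize}
\item If $\pi_{n-1}=1$, delete the $1$. The entry $\pi_n$ then has $\ell=r-1$ in the reduced permutation, with last value $i-1$. Hence the weight changes by the factor $\frac{r}{r-1}$, because $\ell(1)=1$. This produces the term $\frac{r}{r-1}\pk(n-1,i-1,r-1)$.
\item If $\pi_{n-1}=2$, locate the $1$. If it lies in the window $\pi_{n-r+1},\dots,\pi_{n-2}$ at distance $r_1$ from the end, Lemma~\ref{lemm:item1-r1} (or its analogue for general $i$) forces the segment from $1$ to $2$ to be $1,r_1,r_1-1,\dots,3,2$. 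Deleting this block of $r_1$ entries leaves a permutation whose last entry has $\ell=r-r_1$. The block contributes weight $2$, and the last parking entry changes from $r$ to $r-r_1$. This gives $\frac{2r}{r-r_1}\pk(n-r_1,i-r_1,r-r_1)$.
\item If the $1$ lies at or before position $n-r$, Lemma~\ref{lemm:item2-decreasing} forces the window to be $r,r-1,\dots,2$. Collapsing it leaves the $r$ factor and the last $\ell$ equal to $1$, giving $r\,\pk(n-r+1,i-r+1,1)$.
\end{itemize}

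\emph{Special cases.} When $i=r<n$, all values below $i$ lie immediately before $i$, so $\pi$ is a skew-type concatenation. The prefix carries the large values and the suffix is a pattern of length $r$ ending at its maximum. This gives the product $\pk(n-r)\pk(r,r)$; we must check that no forbidden pattern crosses the blocks, which holds because the large values sit before the small ones. When $i=r=n$, the same sum applies with $i=n$, but there is no $r\pk(\cdot,\cdot,1)$ term since $\pi_{n-r}$ does not exist. When $r=2$, the sum over $r_1$ is empty. The terms $\frac{2}{1}\pk(n-1,i-1,1)$ and $2\pk(n-1,i-1,1)$ combine to $4\pk(n-1,i-1,1)$, and for $i=2$ the answer degenerates to $2\pk(n-2)$. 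We verify these identities against the general formula, and check the initial values and listed terms directly for $n\le 3$.

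\emph{Expected obstacle.} The hardest step is proving that the deletion maps in the case $\pi_{n-1}=2$ are bijections onto exactly the stated smaller classes. This requires two things. First, deleting the forced block must preserve $\ell$ for all remaining entries except the last. Second, every permutation in the target class must lift back to a $\{1234,2134\}$-avoider. For the lifting we would reinsert the block $1,r_1,\dots,2$ just before the final entry and argue pattern avoidance using the fact that the target's last $r-r_1$ entries lie below $i$.
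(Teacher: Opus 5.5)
Your proposal is correct and follows essentially the same route as the paper. You delete $\pi_n$ when $r=1$; for $r\ge 2$ you split on $\pi_{n-1}\in\{1,2\}$ and on whether the $1$ lies inside or before the window, using Lemmas~\ref{lemm:item1-r1} and~\ref{lemm:item2-decreasing}, and you use the skew decomposition for $i=r<n$. The only difference is that you read off the $r=2$ and $i=r=n$formulas as specializations of the general case, which the paper proves directly.

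The surjectivity you flag as the main obstacle is not addressed in the paper's proof at all, which only invokes Claim~\ref{claim:reduavoid} for the forward direction. It is routine, though. The reinserted entries are small values forming a decreasing run, possibly preceded by the $1$, so none of them can lie in an occurrence of $1234$ or $2134$. Your hypothesis $\ell=r-r_1$ on the target is what restores $\ell(\pi_n)=r$; it is not what gives avoidance.
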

The strategy of the proof is as follows. We begin with a permutation $\pi$ of length $n$ and parking table $p$. By the permutation scheme, we reduce the permutation $\pi$ by deleting elements according to the scheme. This deletion causes some elements of the parking table $p$ to change (depending on the deletion). Since the product of the elements of $p$ yields the number of parking functions that lead to the parking permutation $\pi$, to recover this number, we multiply the new product of the elements in the new parking table by some factor that depends on the deletion.
\begin{remark}
Whenever a word $w$ of length $n$ is reduced to a permutation $\red(w)$ of length $m$, we simply write $\red(w)=w_1w_2\cdots w_m$ where for each $k$, $w_k$ is the corresponding element after reduction. For the main proof, we only keep track of the last two elements depending on deletion.
\end{remark}
\begin{proof}
Let $\Pi=\{1234,2134\}$ and $\pi\in \Av_n(\Pi)$ such that $\pi_{n-1}=j,\pi_n=i$ and $p=(p_1,p_2,\ldots, p_n)$ is the parking table of $\pi$ such that and $p_n=r$. 
We prove the recurrence via the following cases on the value of $r$:
\begin{enumerate}
    \item If $r=1$: In this case, $\pi_{n-1}>\pi_{n}$ and so $\pi_{n-1},\pi_n$ reduce to $2,1$. Hence, $i+1\leq j \leq n$. By the permutation scheme, delete $\pi_n=i$ from $\pi$ and apply the reduction map to obtain a permutation of length $n-1$. The resulting word after deletion is $w=\pi_1\pi_2\cdots \pi_{n-2}j$ and reducing $w$ to a permutation of length $n-1$ results in a permutation whose last element is $j-1$. By applying Lemma~\ref{ex:suffix1} to the reduced permutation, we obtain
    \[  \pk(n,i,r) = \sum_{j=i}^{n-1} \pk(n-1,j).\]
    \item If $r=2$: In this case, if $r=2$ then $j=\pi_{n-1}<\pi_n=i$ and $\pi_n<\pi_{n-2}$. Hence, we proceed with the cases $\pi_n=2$ or $\pi_n\neq 2$ from the permutation scheme.
    \begin{enumerate}
        \item If $\pi_n=2$ then $\pi_{n-1}=1$ and the parking table is $p=(p_1,p_2,\ldots, 1,2)$. By the permutation scheme, delete $\pi_{n-1}=1$ from $\pi$ and apply the reduction map to obtain a permutation of length $n-1$. The resulting word after deletion is $w=\pi_1\pi_2\cdots \pi_{n-2}2$ and reducing $w$ to a permutation of length $n-1$ results in $\red(w)=w_1w_2\cdots w_{n-2}1$. The parking table of $w$ and $\red(w)$ coincide for the first $n-2$ elements by Claim~\ref{lem:parkingtables}. The corresponding parking table of $\red(w)$ is $p'=(p_1,p_2,\ldots, p_{n-2},1)$ since the last element of $\red(w)$ equals to one. Furthermore, since $\red(w)$ ends with the element one, deleting it and reducing again yields a permutation of length $n-2$ and its parking table is $p''=(p_1,p_2,\ldots, p_{n-2})$. 
        In particular, to recover the product $p_1p_2\cdots p_{n-2}12$, we multiply the product $p_1p_2\cdots p_{n-2}$ by $2$, which results in the number of parking functions whose parking permutations avoid $\Pi$. Thus, $\pk(n,i,r)=2\pk(n-2)$.
        \item If $\pi_n\neq 2$ with $j<i$ and $r=2$ then $j=1$ or $j=2$ by the permutation scheme. Since $r=2$ then $\pi_{n-2}>\pi_n=i$.\label{case2b}
        \begin{enumerate}
            \item If $j=1$ then $\pi=\pi_1\pi_2\cdots 1i$ ($i>2$) and the parking table is $p=(p_1,p_2,\ldots, 1,2)$. By the permutation scheme, delete $\pi_{n-1}=1$ and apply the reduction map to obtain a permutation of length $n-1$. The resulting word after deletion is $w=\pi_1\pi_2\cdots \pi_{n-2}i$ and reducing $w$ to a permutation of length $n-1$ results in a permutation whose last element is $i-1$ (since $j=1<i$). Thus, $\red(w)=w_1w_2\cdots w_{n-2}(i-1)$. The corresponding parking table of $\red(w)$ is $p'=(p_1,p_2,\ldots, p_{n-2},1)$ since $w_{n-2}\geq i>i-1$ as $\pi_{n-2}>\pi_n=i$. Moreover, to recover the product $p_1p_2\cdots p_{n-2}12$, we multiply the product $p_1p_2\cdots p_{n-2}$ by $2$, which results in the number $\pk(n,i,r)$. Thus, $\pk(n,i,r)=2\pk(n-1,i-1,1)$.
            \item If $j=2$ then $\pi=\pi_1\pi_2\cdots 2i$ ($i>2$) and the parking table is $p=(p_1,p_2,\ldots, 1,2)$ (since $\pi_{n-2}>i>2$). Then again, delete $\pi_{n-1}=2$ and a similar argument as in the case for $j=1$ shows that $\pk(n,i\neq 2,r)=2\pk(n-1,i-1,1)$.
        \end{enumerate}
        
        Altogether, for case~(\ref{case2b}), we obtain $\pk(n,i\neq 2,r)=2\pk(n-1,i-1,1)+2\pk(n-1,i-1,1)=4\pk(n-1,i-1,1)$.
    \end{enumerate}
    
    \noindent The next two cases are broken up into the cases $r=i<n$ or $r=i=n$.
    
    \item If $2<r<n$ and $r=i$: In this case, $r>2$ implies that $j=\pi_{n-1}<\pi_n=i$. By the permutation scheme $j=1$ or $j=2$. Moreover, $\pi$ can be decomposed in the following form:
    \[\pi=\underbrace{\pi_1\pi_2\cdots \pi_{n-i}}_{n-i}\underbrace{\pi_{n-i+1}\pi_{n-i+2}\cdots \pi_{n-2}j}_{i-1} i\]
    where $\pi_k<\pi_n=i$ for any $k, n-i<k<n$, and $\pi_{n-i}>\pi_n=i$. In particular, if $j=1$ then \[\{\pi_{n-i+1},\pi_{n-i+2},\ldots ,\pi_{n-2}\}=\{2,3,\ldots, i-2,i-1\},\]
    and if $j=2$ then
    \[ \{\pi_{n-i+1},\pi_{n-i+2},\ldots ,\pi_{n-2}\}=\{1,3,\ldots, i-2,i-1\}.\]
    This is true because every element in the set $\{\pi_{n-i+1},\pi_{n-i+2},\ldots ,\pi_{n-2},j\}$ is smaller than $i$ and there are $i-1$ elements in the set. Thus, the number of parking functions whose parking permutations avoid the patterns $\Pi$ is equal to $\pk(n,i,r)=\pk(n-i)\pk(i,i)$ with $i=r$. Hence, $\pk(n,i,r)=\pk(n-r)\pk(r,r)$. \label{case3}
    \item If $r=n$ and $r=i$: In this case, $r=n$ implies that $j=\pi_{n-1}<\pi_n=n$. By the permutation scheme $j=1$ or $j=2$. The key to this case is noting that decomposing $\pi$ into two block as in case~(\ref{case3}) is not possible. Indeed, all elements from $\pi_1$ to $\pi_{n-1}$ are less than $\pi_n=n$. Hence, we proceed with the cases $\pi_{n-1}=1$ or $\pi_{n-1}=2$ from the permutation scheme.\label{item:case4}
    \begin{enumerate}
        \item If $\pi_{n-1}=1$ then $\pi=\pi_1\pi_2\cdots 1n$ and the parking table is $p=(p_1,p_2,\ldots, 1,n)$. By the permutation scheme, delete $\pi_{n-1}=1$ and apply the reduction map to obtain a permutation of length $n-1$. The resulting word after deletion is $w=\pi_1\pi_2\cdots \pi_{n-2}n$ and reducing $w$ to a permutation of length $n-1$ results in $\red(w)=(\pi_1-1)(\pi_2-1)\cdots (\pi_{n-2}-1) (n-1)$, since for every $k\in[n-2]$, we have $\pi_k>\pi_{n-1}=1$. The corresponding parking table of $\red(w)$ is $p'=(p_1,p_2,\ldots, p_{n-2}, n-1)$, where the first $n-2$ elements coincide with the parking table of $p$ by Claim~\ref{lem:parkingtables}. Therefore, to recover the product $p_1p_2\cdots p_n$, we multiply $p_1p_2\cdots p_{n-2}(n-1)$ by $\frac{n}{n-1}$. Hence $\pk(n,i,r)=\frac{n}{n-1}\pk(n-1,n-1,n-1)$.
        \item If $\pi_{n-1}=2$ then $\pi=\pi_1\pi_2\cdots 2 n$ and the parking table is $p=(p_1,p_2,\ldots, p_{n-1},n)$. Then there exists an index $k$ such that $\pi_k=1$ for some $1\leq k \leq n-2$. Let $k$ be such index and consider the subsequence $w=1\pi_{k+1}\pi_{k+2}\cdots 2$ of $\pi$. By Lemma~\ref{lemm:item1-r1} together with $\pi_{k+1}>\pi_k=1$, we obtain that the parking table of $w$ is $p'=(1,2,1,\ldots, 1)$. Suppose that $w=1\pi_{k+1}\pi_{k+2}\cdots 2$ is of length $r_1$, where $2\leq r_1\leq n-1$. The bounds on $r_1$ follows from observing that $w$ can be at most length $n-1$ if $\pi_1=1$ and it can be at least length $2$ if $\pi_{n-2}=1$. Hence, the parking table $p$ of $\pi$ is equal to \[p=(p_1,p_2,\ldots, p_{r_1-1},1,2,\underbrace{1,\ldots, 1}_{r_1-2}, n).\]
        Delete $w=1\pi_{k+1}\pi_{k+2}\cdots 2$ from $\pi$ and apply the reduction map to obtain a permutation of length $n-r_1$. The resulting word after deletion is $w'=\pi_1\pi_2\cdots \pi_{n-r_1-1}n$ and reducing $w'$ to a permutation of length $n-r_1$ results in $\red(w')=w'_1w'_2\cdots(n-r_1)$. The corresponding parking table of $\red(w')$ is $q=(p_1,p_2,\ldots, p_{n-r_1-1},n-r_1)$. This follows by the deletion of exactly $r_1$ elements all less than $\pi_n=n$. Therefore, to recover the product \[p_1p_2\cdots p_{n-r_1-1}12\underbrace{1\cdots 1}_{r_1-2} n=2np_1p_2\cdots p_{n-r_1-1},\] we multiply $p_1p_2\cdots p_{n-r_1-1}(n-r_1)$ by $\frac{2n}{n-r_1}$ for all $2\leq r_1 \leq n-1$. Hence $\pk(n,i,r)=\sum_{r_1=2}^{n-1} \frac{2n}{n-r_1}\pk(n-r_1,n-r_1,n-r_1)$.
    \end{enumerate}
    Altogether, for case~(\ref{item:case4}), we obtain
    \[ \pk(n,i,r)=\frac{n}{n-1}\pk(n-1,n-1,n-1)+ \sum_{r_1=2}^{n-1} \frac{2n}{n-r_1}\pk(n-r_1,n-r_1,n-r_1).\]
    \item The last case is if $r\neq i$. In this case, if $r<i$ then $i<n$ because if not, $i=n$ implies $r=i$, which is the previous case. Thus $2<r<i$ and $j=\pi_{n-1}<\pi_n=i$. Hence, we proceed with the two cases $\pi_{n-1}=1$ or $\pi_{n-1}=2$ from the permutation scheme. \label{item:case5}
    \begin{enumerate}
        \item If $\pi_{n-1}=1$ then $\pi=\pi_1\pi_2\cdots 1i$ and the parking table is $p=(p_1,p_2,\ldots, 1,r)$. By the permutation scheme, delete $\pi_{n-1}=1$ and apply the reduction map to obtain a permutation of length $n-1$. The resulting word after deletion is $w=\pi_1\pi_2\cdots \pi_{n-2}i$ and reducing $w$ to a permutation of length $n-1$ results in $\red(w)=w_1w_2\cdots w_{n-2}(i-1)$. The corresponding parking table of $\red(w)$ is $p'=(p_1,p_2,\ldots, p_{n-2}, r-1)$. Hence, to recover the product $p_1p_2\cdots p_{n-2}1r=rp_1p_2\cdots p_{n-2}$, we multiply $p_1p_2\cdots p_{n-2}(r-1)$ by $\frac{r}{r-1}$. It follows that $\pk(n,i,r)=\frac{r}{r-1}\pk(n-1,i-1,r-1)$.
        \item If $\pi_{n-1}=2$ then $\pi=\pi_1\pi_2\cdots 2i$ and the parking table is $p=(p_1,p_2,\ldots, p_{n-1}, r)$. 
        This implies that $\pi_n=i>\pi_k$ for every $n-r+1\leq k \leq n-1$ and $\pi_{n-r}>\pi_n=i$. Hence, either $\pi_k=1$ for some $1\leq k \leq n-r-1$ or $\pi_k=1$ for some $n-r+1\leq k\leq n-2$.
        Observe that $\pi_{n-r}\neq 1$ as this would imply that $1=\pi_{n-r}<\pi_n=i$, which is not the case. We proceed with the two cases based on the position of the element $1$. 
        \begin{enumerate}
            \item If $\pi_k=1$ for some $k\in \NN$, $1\leq k \leq n-r-1$ then since $r>2$, $\pi_a>\pi_b$ for all $a,b$ such that $n-r+1\leq a<b \leq n-2$. Otherwise if $\pi_a<\pi_b$ then $\pi_{k}=1, \pi_a, \pi_b, \pi_n=i$ is order isomorphic to $1234$, a contradiction. Hence, $\pi_a>\pi_b$ for all $a,b$ with $n-r+1\leq a<b \leq n-2$. It follows that $\pi_{n-r+1}>\pi_{n-r+2}>\cdots > \pi_{n-2}$. Thus, the parking table of the word
            \[ (\pi_{n-r+1}, \pi_{n-r+2}, \ldots, \pi_{n-2},2)\]
            is equal to
            \[(1,1,\ldots, 1,1),\]
            with $r-1$ ones.
            Therefore, the parking table of \[\pi=\pi_1\pi_2\cdots \pi_{n-r}\pi_{n-r+1}\cdots \pi_{n-2}2i\] is \[p=(p_1,p_2,\cdots, p_k=1, p_{k+1}, \ldots, p_{n-r}, \underbrace{1,1,\ldots, 1}_{r-1}, r).\]
            Delete $\pi_{n-r+1}\pi_{n-r+2}\cdots \pi_{n-2}2$ from $\pi$ and apply the reduction map to obtain a permutation of length $n-r+1$. The resulting word after deletion is $w=\pi_1\pi_2\cdots \pi_{n-r} i$ and reducing $w$ to a permutation of length $n-r+1$ results in $\red(w)=w_1w_2\cdots (i-r+1)$. This follows by the deletion of exactly $r-1$ elements smaller than $\pi_n=i$. The corresponding parking table of $\red(w)$ is \[p'=(p_1,p_2,\ldots, p_{n-r}, 1),\] since $\pi_n=i<\pi_{n-r}$ and hence $w_{n-r+1}=i-r+1<w_{n-r}=\pi_{n-r}-r+1$ as exactly $r-1$ elements are deleted all less than $\pi_{n-r}$.
            Hence, to recover the product \[p_1p_2\cdots p_{n-r}\underbrace{1\cdots 1}_{r-1}r=rp_1p_2\cdots p_{n-r},\] we multiply $p_1p_2\cdots p_{n-r}$ by $r$. It follows that $\pk(n,i,r)=r \pk(n-r+1, i-r+1,1)$.
            \item If $\pi_k=1$ for some $k\in \NN$, $n-r+1\leq k \leq n-2$, then \[\pi=\pi_1\pi_2\cdots \pi_{n-r}\pi_{n-r+1}\cdots \pi_{k-1}1\pi_{k+1}\cdots \pi_{n-2}2i\] and the parking table is 
            \[p=(p_1,p_2,\ldots , p_{n-r}, p_{n-r+1},\ldots, p_{k-1},1, p_{k+1}, \ldots, p_{n-2},2, r).\] 
            Observe that $\pi_k=1<\pi_{k+1}$ and $\pi_{a}>\pi_b$ for all $a,b$ such that $k+1\leq a<b \leq n-1$. Otherwise, if $\pi_a<\pi_b$ then $\pi_k=1, \pi_a,\pi_b, \pi_n=i$ is order isomorphic to $1234$, a contradiction. Hence, $\pi_k=1<\pi_{k+1}$ and $\pi_{a}>\pi_b$ for all $a,b$ such that $k+1\leq a<b \leq n-1$. It follows that $\pi_{k+1}>\pi_{k+2}>\cdots > \pi_{n-2}>2$. Let $1\pi_{k+1}\pi_{k+2}\cdots \pi_{n-2}2$ be of length $r_1$ for $2\leq r_1\leq r-1$. 
            The bounds on $r_1$ follows from observing that $r_1=2$ if $\pi_{n-2}=1$, and at most equals to $r-1$ if $\pi_{n-r+1}=1$ and $\pi_{n-r}\neq 1$. Hence, the parking table of the word \[(\pi_k=1, \pi_{k+1}, \ldots , \pi_{n-2}, 2)\] is equal to
            \[(1,2,\underbrace{1,\ldots, 1}_{r_1-2}).\]
            Thus, the parking table of \[\pi=\pi_1\pi_2\cdots \pi_{n-r}\pi_{n-r+1}\cdots \pi_{k-1}1\pi_{k+1}\cdots \pi_{n-2}2i,\] is
            \[p=(p_1,p_2,\ldots, p_{n-r}, p_{n-r+1},\ldots, p_{n-r_1-2}, p_{n-r_1-1}, 1,2,\underbrace{1,\ldots, 1}_{r_1-2}, r).\]
            Delete $1\pi_{k+1}\pi_{k+2}\cdots \pi_{n-2}2$ from $\pi$ and apply the reduction map to obtain a permutation of length $n-r_1$. The resulting word after deletion is $w=\pi_1\pi_2\cdots\pi_{n-r}\pi_{n-r+1}\cdots \pi_{n-r_1-1}i$ and reducing $w$ to a permutation of length $n-r_1$ results in \[\red(w)=w_1w_2\cdots w_{n-r}r_{n-r+1}\cdots w_{n-r_1-1}(i-r_1).\] The corresponding parking table of $\red(w)$ is \[p'=(p_1,p_2,\ldots, p_{n-r}, p_{n-r+1}, \ldots, p_{n-r_1-1}, r-r_1).\] This follows by the deletion of exactly $r_1$ elements smaller than $\pi_n=i$. Thus, to recover the product
            \begin{align*}
                \qquad \qquad  p_1p_2\cdots p_{n-r}p_{n-r+1}\cdots p_{n-r_1-2}p_{n-r_1-1}12\underbrace{1\cdots 1}_{r_1-2}r=2rp_1p_2\cdots p_{n-r_1-2}p_{n-r_1-1},
            \end{align*}
            we multiply 
            $p_1p_2\cdots p_{n-r}p_{n-r+1}\cdots p_{n-r_1-1}(r-r_1)$ by $\frac{2r}{r-r_1}$.
            Thus, for all $2\leq r_1\leq r-1$, \[\pk(n,i,r)=\sum_{r_1=2}^{r-1} \frac{2r}{r-r_1}\pk(n-r_1,i-r_1,r-r_1). \]
        \end{enumerate}
    \end{enumerate}
    Altogether, for case~(\ref{item:case5}), we obtain 
    \begin{align*}
        \pk(n,i,r)&=\frac{r}{r-1}\pk(n-1,i-1,r-1) + r\pk(n-r+1,i-r+1,1) \\
        &\qquad + \sum_{r_1=2}^{r-1} \frac{2r}{r-r_1}\pk(n-r_1,i-r_1,r-r_1).
    \end{align*}
\end{enumerate}
In all of the arguments above, by Claim~\ref{claim:reduavoid}, each reduced permutation avoids both patterns $1234, 2134$. In addition, it is easy to check that the initial conditions are satisfied for $n\leq 3$.
\end{proof}

\section*{Acknowledgements}
L.~Martinez was supported by the NSF Graduate Research Fellowship Program under Grant No. 2233066 and in part by a Joel Lebowitz Summer Research Fellowship.

\bibliographystyle{plain}
\bibliography{bibliography.bib}

\end{document}